\def \cC{\mathcal{C}}
\def \cD{\mathcal{D}}
\def \cF{\mathcal{F}}
\def \cL{\mathcal{L}}
\def \cQ{\mathcal{Q}}
\def \P{\mathsf P}
\def \PP{\widetilde{\mathsf{P}}}
\def \E{\mathsf E}
\def \EE{\widetilde{\mathsf{E}}}
\def \N{\mathbb{N}}
\def \R{\mathbb{R}}
\def \ud{\mathrm{d}}
\def \e{\mathrm{e}}
\newcommand{\eps}{\varepsilon}
\newcommand{\ind}{\mathbf{1}}
\newtheorem{theorem}{Theorem}[section]
\newtheorem{lemma}[theorem]{Lemma}
\newtheorem{corollary}[theorem]{Corollary}
\newtheorem{proposition}[theorem]{Proposition}
\newtheorem{remark}[theorem]{Remark}
\title[Optimal stopping for the exponential of a Brownian bridge]{Optimal stopping \\ for the exponential of a Brownian bridge}
\author[De Angelis and Milazzo]{Tiziano De Angelis \and Alessandro Milazzo}
\subjclass[2010]{60G40, 60J65, 35R35}
\keywords{optimal stopping, Brownian bridge, free boundary problems, regularity of value function, continuous boundary, bond/stock selling}
\address{T.~De Angelis: School of Mathematics, University of Leeds, Woodhouse Lane, LS2 9JT Leeds, UK.}
\email{\href{mailto:t.deangelis@leeds.ac.uk}{t.deangelis@leeds.ac.uk}}
\address{A.~Milazzo: Department of Mathematics, Imperial College London, 16-18 Princess Gardens, SW7 1NE London, UK.}
\email{\href{mailto:a.milazzo16@imperial.ac.uk}{a.milazzo16@imperial.ac.uk}}
\date{\today}
\numberwithin{equation}{section}
\begin{document}

\begin{abstract}
In this paper we study the problem of stopping a Brownian bridge $X$ in order to maximise the expected value of an exponential gain function. In particular, we solve the stopping problem
\[
\sup_{0\le \tau\le 1}\E[\e^{X_\tau}],
\]
which was posed by Ernst and Shepp in their paper [Commun.~Stoch.~Anal., 9 (3), 2015, pp.~419--423] and was motivated by bond selling with non-negative prices.

Due to the non-linear structure of the exponential gain, we cannot rely on methods used in the literature to find closed-form solutions to other problems involving the Brownian bridge. Instead, we must deal directly with a stopping problem for a time-inhomogeneous diffusion. We develop techniques based on pathwise properties of the Brownian bridge and martingale methods of optimal stopping theory, which allow us to find the optimal stopping rule and to show regularity of the value function.
\end{abstract}

\maketitle

\section{Introduction}

Problems of optimal stopping involving Brownian bridge have a long history, dating back to the early days of modern optimal stopping theory. The first results were obtained by Dvoretzky \cite{D67} and Shepp \cite{shepp1969}. Both authors considered stopping of a Brownian bridge to maximise its expected value. Dvoretzky proved existence of an optimal stopping time and Shepp  provided an explicit solution in terms of the first time the Brownian bridge (pinned at zero at time $T=1$) exceeds a boundary of the form $t\mapsto a\sqrt{1-t}$, for $t\in[0,1]$ and a suitable $a>0$.

Few years later, F\"ollmer \cite{follmer1972} extended the study to the case of a Brownian bridge whose pinning point is random with normal distribution. He showed that the optimal stopping time is the first time the process crosses a time-dependent boundary and the stopping set may lie either above or below the boundary, depending on the variance of the pinning point's distribution.

More recently, Ekstr\"om and Wanntorp \cite{ekstrom2009optimal} studied optimal stopping of a Brownian bridge via the solution of associated free boundary problems. They recovered results by Shepp and extended the analysis by finding explicit solutions to some examples with more general gain functions than the linear case.

Optimal stopping of a Brownian bridge with random pinning point or random pinning time were also studied in \cite{ekstrom2017} and \cite{glover2019}, respectively. In \cite{ekstrom2017}, the authors consider more general versions of the problem addressed in \cite{follmer1972} and, among other things, they give general sufficient conditions for optimal stopping rules in the form of a hitting time to a one-sided stopping region. In \cite{glover2019}, the author provides sufficient conditions for a one-sided stopping set and is able to solve the problem in closed form for some choices of the pinning time's distribution.

Problems of optimal stopping for Brownian bridge have attracted significant attention from the mathematical finance community thanks to their application to trading. Already in 1970, Boyce \cite{boyce1970} proposed applications of Shepp's results to bond trading. In that context the pinning effect of the Brownian bridge captures the well-known {\em pull-to-par} mechanism of bonds. Many other applications to finance have appeared in recent years, motivated by phenomena of stock pinning (see, e.g.~\cite{avellaneda2003} and \cite{jeannin2008modeling} among many others). Explicit results for some problems of optimal double stopping of a Brownian bridge, also inspired by finance, were obtained in \cite{bardoux2015}.

In our paper we study a problem that was posed by Ernst and Shepp in Section 3 of \cite{ernst2016}. In particular, we are interested in finding the optimal stopping rule that maximises the expected value of the exponential of a Brownian bridge which is constrained to be equal to zero at time $T=1$. Besides the pure mathematical interest, this problem is better suited to model bond/stock trading situations than its predecessors with linear gain function. Indeed, the exponential structure avoids the unpleasant feature of negative asset prices, whilst retaining the pinning effect discussed above. Questions concerning stopping the exponential of a Brownian bridge were also considered in \cite{leung2018} in a model inspired by financial applications. In fact, in \cite{leung2018} authors consider a more general model than ours and allow a random pinning point. However, the complexity of the model is such that the analysis is carried out mostly from a numerical point of view.

In this work we prove that the optimal stopping time for our problem is the first time the Brownian bridge exceeds a time-dependent optimal boundary $t\mapsto b(t)$, which is non-negative, continuous and non-increasing on $[0,1]$. The boundary can be computed numerically as the unique solution to a suitable integral equation of Volterra type (see Section \ref{sec:II}). The full analysis that we perform relies on four equivalent formulations of the problem (see \eqref{MarkovianPb}, \eqref{MarkovianPb1}, \eqref{MarkovianPb2} and \eqref{MarkovianPbTilde}), which are of interest in their own right, and offer different points of view on the problem.

Our study reveals interesting features of the value function $v$. Indeed, we can prove that $v$ is continuously differentiable on $[0,1)\times\R$, both with respect to time and space, with second order spatial derivative which is continuous up to the optimal boundary (notice that this regularity goes beyond the standard smooth-fit condition in optimal stopping). Notice, however, that the value function {\em is not} continuous at $\{1\}\times(-\infty,0)$, due to the pinning behaviour of the Brownian bridge as $t\to 1$. 

We extend the existing literature in several directions. The exponential structure of the gain function makes it impossible to use scaling properties that are central in all the papers where explicit solutions were obtained (see, e.g., \cite{shepp1969}, \cite{ekstrom2009optimal}, \cite{bardoux2015}, \cite{glover2019}). For this reason we must deal directly with a stopping problem for a time-inhomogeneous diffusion. Optimal boundaries for such problems are hard to come by in the literature and, in order to prove monotonicity of the boundary (which is the key to the subsequent analysis), we have developed a method based on pathwise properties of the Brownian bridge and martingale theory (see Theorem \ref{DecreasingBarrier}). The task is challenging because there is no obvious comparison principle for sample paths of Brownian bridges $X^{t,x}$ and $X^{t',x}$ starting from a point $x\in\R$ at different instants of time $t\neq t'$. Hence, our approach could be used in other optimal stopping problems involving time-inhomogeneous diffusions. 

It is worth noticing that, in Section 5 of \cite{ekstrom2017}, authors also obtain a characterisation of the optimal boundary via integral equations. However, in that case a time-change of the Brownian bridge and linearity of the gain function are used to infer monotonicity of the boundary. 

The paper is organised as follows. In Section \ref{sec:PF} we provide some background notions on the Brownian bridge and formulate the stopping problem. In Section \ref{sec:value} we prove continuity of the value function and existence of an optimal boundary. In Section \ref{sec:freeb} we prove that the boundary is monotonic non-increasing, continuous and bounded on $[0,1]$ and find its limit at time $t=1$. In Section \ref{sec:C1} we find $C^1$ regularity for the value function and we derive the integral equation that uniquely characterises the optimal boundary. In Section \ref{sec:numerics} we solve the integral equation numerically using a Picard's iteration scheme and we provide plots of the optimal boundary and of the value function. We also illustrate numerically the convergence of the algorithm for the boundary and the dependence of the boundary on the pinning time of the Brownian bridge.

\section{Problem formulation}\label{sec:PF}
We consider a complete filtered probability space $(\Omega, \cF, (\cF_t)_{t\ge 0}, \P)$, equipped with a standard Brownian motion $W:=(W_t)_{t\ge 0}$. With no loss of generality, we assume that $(\cF_t)_{t\ge 0}$ is the filtration generated by $W$ and augmented with $\P$-null sets. Further, we denote by $X:=(X_t)_{t\in[0,1]}$ a Brownian bridge pinned at zero at time $T=1$, i.e.~such that $X_1=0$. If the Brownian bridge starts at time $t\in[0,1)$ from a point $x\in\R$, we sometimes denote it by $(X^{t,x}_{s})_{s\in[t,1]}$ in order to keep track of the initial condition. 

It is well-known that, given an initial condition $X_t=x$ at time $t\in[0,1)$, the dynamics of $X$ can be described by the following stochastic differential equation (SDE):
\begin{align}\label{MarkovX}
\ud X_s=-\frac{X_s}{1-s}\ud s+\ud W_s, \qquad s\in[t,1).
\end{align}
The unique strong solution of the SDE \eqref{MarkovX} is given by
\begin{align}\label{MarkBrownBridge}
X^{t,x}_s &=(1-s)\left(\frac{x}{1-t}+\int_t^s \frac{\mathrm{d} W_u}{1-u} \right), \quad s\in[t,1]. 
\end{align}

The expression in \eqref{MarkBrownBridge} allows to identify (in law) the process $X^{t,x}$ with the process $Z^{t,x}:=(Z^{t,x}_s)_{s\in[t,1]}$ given by
\begin{equation}\label{MarkBrownBridge1}
Z^{t,x}_s:=\frac{1-s}{1-t}x+\sqrt{\frac{1-s}{1-t}}W_{s-t}, \quad s\in[t,1].
\end{equation}
That is, we have 
\begin{align}\label{eq:laws}
\mathsf{Law}\big(X^{t,x}_s\,,\, s\in[t,1]\big)=\mathsf{Law}\big(Z^{t,x}_s\,,\,s\in[t,1]\big)
\end{align}
for any initial condition $(t,x)\in[0,1]\times\R$. In the rest of the paper we will often use the notations $\E_{t,x}[\,\cdot\,]=\E[\,\cdot\,|X_t=x]$ and, equivalently, $\E_{t,x}[\,\cdot\,]=\E[\,\cdot\,|Z_t=x]$.

Using the above mentioned identity in law of $X$ and $Z$, along with well-known distributional properties of the Brownian motion, it can be easily checked that 
\begin{align}\label{eq:ui}
\E_{t,x}\left[\sup_{t\leq s\leq 1}\e^{X_s}\right]\le \e^{|x|}\E\left[\e^{S_1}\right]<+\infty,
\end{align}
where $S_1:=\sup_{0\le s\le 1}|W_s|$. The random variable $S_1$ will be used several times in what follows and we denote 
\begin{align}\label{eq:c1}
c_1:=\E\Big[\e^{S_1}\Big]\quad\text{and}\quad c_2:=\E\Big[S_1\e^{S_1}\Big].
\end{align}
\vspace{+4pt}

\subsection{The stopping problem}
Our objective is to study the optimal stopping problem
\begin{equation}\label{MarkovianPb}
v(t,x)=\sup_{0\leq \tau\leq 1-t} \E_{t,x}\Big[\e^{X_{t+\tau}}\Big], \quad \text{for $(t,x)\in[0,1]\times\R$},
\end{equation}
where $\tau$ is a random time such that $t+\tau$ is a $(\cF_s)_{s\geq t}$-stopping time (in what follows we simply say that $\tau$ is a $(\cF_s)_{s\geq t}$-stopping time, as no confusion shall arise). 
Thanks to \eqref{eq:ui}, we can rely upon standard optimal stopping theory to give some initial results. In particular, we split the state space $[0,1]\times\R$ in a continuation region $\cC$ and a stopping region $\cD$, respectively given by 
\begin{align}
&\cC:=\{(t,x)\in[0,1]\times\R : \: v(t,x)>\e^x\},\\[+4pt]
&\cD:=\{(t,x)\in[0,1]\times\R : \: v(t,x)=\e^x\}.
\end{align}

Then, for any $(t,x)\in[0,1]\times\R$, the smallest optimal stopping time for problem \eqref{MarkovianPb} is given by (see, e.g., \cite[Thm.~D.12, Appendix D]{karatzas1998methods})
\begin{equation}\label{Tau*}
\tau^*:=\inf\{s\in [0,1-t]: (t+s,X_{t+s})\in \cD\},\quad\text{$\P_{t,x}$-a.s.}
\end{equation}
We will sometimes use the notation $\tau^*_{t,x}$ to keep track of the initial condition of the time-space process $(t,X)$.

Moreover, standard theory on the Snell envelope also guarantees (see, e.g., \cite[Thm.~D.9, Appendix D]{karatzas1998methods}) that the process $V:=(V_t)_{t\in[0,1]}$ defined by $V_t:=v(t,X_t)$ is a right-continuous, non-negative, $\P$-super-martingale and that $V^*:=(V_{t\wedge\tau^*})_{t\in[0,1]}$ is a right-continuous, non-negative, $\P$-martingale.

To conclude this section, we show two further formulations of problem \eqref{MarkovianPb} that will become useful in our analysis. The former uses \eqref{eq:laws} and the fact that, thanks to the above discussion, we only need to look for optimal stopping times in the class of entry times to measurable sets. Hence, we have
\begin{equation}\label{MarkovianPb1}
v(t,x)=\sup_{0\leq \tau\leq 1-t} \E_{t,x}\Big[\e^{Z_{t+\tau}}\Big],\quad\text{for $(t,x)\in[0,1]\times\R$}.
\end{equation}

The second formulation, instead, uses ideas originally contained in \cite{jaillet1990variational}. In particular, for any fixed $t\in[0,1]$ and any $(\cF_s)_{s\ge t}$-stopping time $\tau\in[0,1-t]$, we can define an $(\hat{\cF}_s)_{0\leq s\leq 1}$ stopping time $\theta\in[0,1]$ such that $\tau=\theta(1-t)$ and $\hat{\cF}_s=\cF_{s(1-t)}$. In addition to this, notice that
\begin{align}\label{eq:laws2}
\mathsf{Law}\big(W_{s(1-t)}\,,\, s\ge0\big)=\mathsf{Law}\big(\sqrt{1-t} \, W_s\,,\,s\ge0 \big).
\end{align}
Therefore, problem \eqref{MarkovianPb1} (hence problem \eqref{MarkovianPb}) can be rewritten as
\begin{equation}\label{MarkovianPb2}
v(t,x)=\sup_{0\leq \theta\leq 1} \E\left[\exp\left((1-\theta)x+\sqrt{(1-\theta)(1-t)}W_\theta\right)\right].
\end{equation}
This last formulation of the problem has the advantage that the domain of admissible stopping times $\theta$ is independent of the initial time $t$.

\begin{remark}\label{rem:T}
There is no loss of generality in our choice of a pinning time $T=1$ and a pinning point $\alpha=0$. We could equivalently choose a generic pinning time $T>t\geq 0$ and a generic pinning point $\alpha\in\R$ and consider the dynamics
\begin{align}
\ud X_s=-\frac{X_s-\alpha}{T-s}\ud s+\ud W_s, \qquad s\in[t,T).
\end{align}
Then, the analysis in the next sections would remain valid up to obvious tweaks.
\end{remark}

\section{Continuity of the value function and existence of a boundary}\label{sec:value}

In this section we prove some properties of the value function, including its continuity, and derive the existence of a unique optimal stopping boundary. It follows immediately from \eqref{eq:ui} that 
the value function is non-negative and uniformly bounded on compact sets. In particular, we have
	\begin{align}\label{eq:vb}
	0\leq v(t,x)\leq c_1 \e^{|x|}, \quad \text{for all $(t,x)\in [0,1]\times \R$},
	\end{align}
	where $c_1>0$ is given by \eqref{eq:c1}.

\begin{proposition}\label{xVcontinuity}
	The map $x\mapsto v(t,x)$ is convex and non-decreasing. Moreover, for any compact set $K\subset \R$ there exists $L_K>0$ such that
	\[
	\sup_{t\in[0,1]}|v(t,y)-v(t,x)|\leq L_K|y-x|, \quad \text{for all $x,y\in K$.}
	\]
	\end{proposition}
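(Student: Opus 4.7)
The plan is to exploit formulation \eqref{MarkovianPb2}, in which the dependence on $x$ is completely explicit as the exponential of a linear function. Setting
\[
G_\theta(t,x) := \E\bigl[\exp\bigl((1-\theta)x + \sqrt{(1-\theta)(1-t)}\,W_\theta\bigr)\bigr],
\]
I would prove each of the three assertions for $G_\theta$ with estimates uniform in the stopping time $\theta$ (and in $t$), and then transfer them to $v(t,x) = \sup_\theta G_\theta(t,x)$ via the supremum.

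Monotonicity and convexity come essentially for free from the integrand: since $1-\theta \ge 0$ almost surely, the pathwise map $x \mapsto \exp((1-\theta)x + \sqrt{(1-\theta)(1-t)}\,W_\theta)$ is non-decreasing and convex (the exponential of an affine function whose slope is non-negative), and both properties are preserved by taking expectation and then pointwise supremum over $\theta$.

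For the Lipschitz estimate on a compact $K \subset \R$, set $R := \max_{x \in K}|x|$. The mean value theorem applied to $z \mapsto e^{(1-\theta)z}$, combined with $0 \le 1-\theta \le 1$, gives
\[
\bigl|e^{(1-\theta)y} - e^{(1-\theta)x}\bigr| \leq (1-\theta)\e^{(1-\theta)\max(|x|,|y|)}|y-x| \leq \e^R|y-x|
\]
for all $x,y \in K$. Since $(1-\theta)(1-t) \le 1$, the Gaussian factor is pathwise dominated by $\e^{S_1}$, whose expectation is $c_1<\infty$ by \eqref{eq:c1}. Multiplying and integrating yields $|G_\theta(t,y) - G_\theta(t,x)| \le c_1 \e^R |y-x|$, with a constant independent of $(\theta,t)$. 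A routine $\eps$-near-maximiser argument transfers this bound to $v$, giving the desired Lipschitz constant $L_K = c_1 \e^R$.

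The only point requiring slight care is that $\theta$ in \eqref{MarkovianPb2} is a genuine random stopping time rather than a deterministic parameter, so the Gaussian contribution $\exp(\sqrt{(1-\theta)(1-t)}\,W_\theta)$ cannot be evaluated in closed form; this is why I control it through the pathwise dominating bound $\e^{S_1}$ rather than by a direct computation. Beyond this, no substantial obstacle arises, and the entire argument hinges on the multiplicative splitting of the $x$-dependence made visible by formulation \eqref{MarkovianPb2}.
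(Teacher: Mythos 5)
Your proof is correct and follows essentially the same route as the paper: both exploit the multiplicative separability of the $x$-dependence visible after passing to the representation with $Z$ (the paper works with formulation \eqref{MarkovianPb1} and plugs in the actual optimal stopping time $\tau^*_{t,y}$, while you work with the time-changed formulation \eqref{MarkovianPb2} and use an $\eps$-near-maximiser, which is an equivalent cosmetic variant). The resulting Lipschitz constant $c_1 e^R$ matches the paper's $L_K = c_1 \max_{x\in K} e^{|x|}$, and the convexity/monotonicity arguments are the same pathwise observations.
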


\begin{proof}
Convexity of $x\mapsto v(t,x)$ follows from linearity of $x\mapsto Z^{t,x}_s$ (see \eqref{MarkBrownBridge1}), convexity of the map $x\mapsto \e^x$ and the well-known inequality $\sup(a+b)\le \sup a+\sup b$.

Monotonicity can be easily deduced by, e.g., the explicit dependence of \eqref{MarkovianPb2} on $x\in\R$. As for the Lipschitz continuity, the claim is trivial for $t=1$ since $v(1,x)=e^x$. For the remaining cases, fix $t\in[0,1)$  and let us fix $y\geq x$. Denote $\tau_y:=\tau^*_{t,y}$, then by monotonicity of $v(t,\,\cdot\,)$, the fact that $\tau_y$ is sub-optimal for $v(t,x)$ and simple estimates, we obtain
	\begin{align*}
	0 & \leq v(t,y)-v(t,x) \\
	&\leq \E\Big[\e^{Z^{t,y}_{t+\tau_y}}-\e^{Z^{t,x}_{t+\tau_y}}\Big] \\
	& = \E\left[\left(\exp\left(\frac{1-(t+\tau_y)}{1-t}y\right)-\exp\left(\frac{1-(t+\tau_y)}{1-t}x\right)\right)\exp\left(\sqrt{\frac{1-(t+\tau_y)}{1-t}}W_{\tau_y}\right)\right]\\
	&\le \E\left[\left(\frac{1-(t+\tau_y)}{1-t}\right)\exp\left(\sqrt{\frac{1-(t+\tau_y)}{1-t}}W_{\tau_y}\right)\right]\e^{|x|\vee |y|}(y-x)\\
	&\le \E\Big[\e^{S_1}\Big]\e^{|x|\vee |y|}(y-x).
	\end{align*}
Hence, the claim follows with $L_K:= c_1 \max_{x\in K}e^{|x|}$.
\end{proof}

Next we show that the value function is locally Lipschitz in time on $[0,1)\times\R$. However, it fails to be continuous at $\{1\}\times(-\infty,0)$.
\begin{proposition}\label{prop:cont-t}
For any $T<1$ and any $0\le t_1<t_2\le T$, we have
\begin{align}\label{eq:lipt}
|v(t_2,x)-v(t_1,x)|\leq \frac{c_2e^{|x|}}{2\sqrt{1-T}}(t_2-t_1), \quad \text{for $x\in\R$},
\end{align}
with $c_2>0$ as in \eqref{eq:c1}. Moreover, 
\begin{align}
\label{eq:discont1}&\lim_{t\to 1}v(t,x)=e^x,\qquad\qquad\text{for $x\ge 0$},\\
\label{eq:discont2}&\liminf_{t\to 1}v(t,x)\ge 1> e^x,\qquad\text{for $x< 0$}.
\end{align}
\end{proposition}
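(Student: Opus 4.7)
The plan is to exploit the reformulation \eqref{MarkovianPb2}, in which the set of admissible stopping times $\theta\in[0,1]$ does not depend on the initial time $t$. This allows one to compare $v(t_1,x)$ and $v(t_2,x)$ directly by plugging an $\eps$-optimal stopping time for one problem into the other.

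For the Lipschitz bound \eqref{eq:lipt}, fix $\eps>0$ and let $\theta^\eps$ be an $\eps$-optimal stopping time for $v(t_2,x)$. Using $\theta^\eps$ sub-optimally in \eqref{MarkovianPb2} at time $t_1$ yields
\[
v(t_2,x)-v(t_1,x)\le \eps+\E\Bigl[\e^{(1-\theta^\eps)x}\bigl(\e^{a\,W_{\theta^\eps}}-\e^{b\,W_{\theta^\eps}}\bigr)\Bigr],
\]
where $a:=\sqrt{(1-\theta^\eps)(1-t_2)}\le b:=\sqrt{(1-\theta^\eps)(1-t_1)}\le 1$. Combining the elementary mean-value estimate $|\e^{aw}-\e^{bw}|\le |b-a|\,|w|\,\e^{\max(a,b)|w|}$ with
\[
b-a=\sqrt{1-\theta^\eps}\,\frac{t_2-t_1}{\sqrt{1-t_1}+\sqrt{1-t_2}}\le \frac{t_2-t_1}{2\sqrt{1-T}}
\]
(which is where the uniform constraint $t_2\le T<1$ enters), and using $(1-\theta^\eps)x\le |x|$ and $|W_{\theta^\eps}|\le S_1$, the right-hand side is at most $\eps+\frac{c_2\e^{|x|}}{2\sqrt{1-T}}(t_2-t_1)$. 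Swapping the roles of $t_1$ and $t_2$ (i.e.\ taking $\theta^\eps$ $\eps$-optimal for $v(t_1,x)$) gives the reverse inequality, and letting $\eps\downarrow 0$ proves \eqref{eq:lipt}.

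For the limits as $t\to 1$, the key observation is that $\tau=1-t$ is always admissible and, by the pinning property $X_1=0$, yields $v(t,x)\ge \E_{t,x}[\e^{X_1}]=1$ for every $(t,x)\in[0,1)\times\R$. Hence for $x<0$ one immediately has $v(t,x)\ge 1>\e^x$, which gives \eqref{eq:discont2}. For $x\ge 0$, the lower bound $v(t,x)\ge \e^x$ comes from $\tau=0$, while \eqref{MarkovianPb2} combined with $(1-\theta)x\le x$ and $\sqrt{(1-\theta)(1-t)}\,W_\theta\le \sqrt{1-t}\,S_1$ gives $v(t,x)\le \e^x\,\E[\e^{\sqrt{1-t}\,S_1}]$. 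Since $\e^{\sqrt{1-t}S_1}\to 1$ almost surely as $t\uparrow 1$ and is dominated by $\e^{S_1}\in L^1(\P)$, dominated convergence sends this upper bound to $\e^x$, establishing \eqref{eq:discont1}.

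No step is a genuine obstacle; the only mildly delicate point is the bookkeeping in the Lipschitz estimate, where one must make sure that the factor $(2\sqrt{1-T})^{-1}$ emerges from bounding $(\sqrt{1-t_1}+\sqrt{1-t_2})^{-1}$ and that the remaining exponential/moment contributions are absorbed into the universal constants $c_1,c_2$ introduced in \eqref{eq:c1}.
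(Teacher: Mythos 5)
Your proof is correct and follows essentially the same approach as the paper: all three parts exploit the reformulation \eqref{MarkovianPb2} (respectively \eqref{MarkovianPb1}) and compare values by inserting a (nearly) optimal stopping time for one initial time into the other, then using the mean-value bound and the moment constants $c_1,c_2$. The only cosmetic differences are your use of $\eps$-optimal stopping times in place of the exact optimizer $\theta^*_{t_2,x}$ (which exists by the standard theory the paper already invokes), and bounding by $\E[\e^{\sqrt{1-t}\,S_1}]$ rather than the equal-in-law $\E[\e^{S_{1-t}}]$ in the proof of \eqref{eq:discont1}.
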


\begin{proof}	
For the proof of \eqref{eq:lipt} we will refer to the problem formulation in \eqref{MarkovianPb2}. Fix $0\leq t_1<t_2\leq T<1$ and let $\theta_2:=\theta^*_{t_2,x}$ be the optimal stopping time for $v(t_2,x)$. Then, given that $\theta_2$ is admissible and sub-optimal for the problem with value $v(t_1,x)$, we have
	\begin{align}\label{eq:lip1}
	&v(t_2,x)-v(t_1,x) \\
	&\leq \E\Big[\e^{(1-\theta_2)x} \Big(\e^{\sqrt{(1-\theta_2)(1-t_2)}W_{\theta_2}}-\e^{\sqrt{(1-\theta_2)(1-t_1)}W_{\theta_2}}\Big)\Big]\notag\\
	&\leq \e^{|x|}\E\Big[\e^{\sqrt{(1-\theta_2)(1-t_1)}|W_{\theta_2}|}\sqrt{(1-\theta_2)}|W_{\theta_2}|\Big]\left(\sqrt{1-t_1}-\sqrt{1-t_2}\right)\notag\\
	&\leq \e^{|x|}\E\Big[S_1\e^{S_1}\Big]\frac{t_2-t_1}{2\sqrt{1-T}}.\notag
	\end{align}
	Now, setting $\theta_1:=\theta^*_{t_1,x}$ we notice that $\theta_1$ is admissible and sub-optimal for the problem with value $v(t_2,x)$. Then, arguments as above give
\[
v(t_2,x)-v(t_1,x)\geq -\e^{|x|}\E\Big[S_1\e^{S_1}\Big]\frac{t_2-t_1}{2\sqrt{1-T}},
\]
	which, combined with \eqref{eq:lip1}, implies \eqref{eq:lipt}.
	
	Finally, we show \eqref{eq:discont1} and \eqref{eq:discont2}. Notice first that $v(1,x)=e^x$ and $v(t,x)\geq \e^x$ for $t\in[0,1)$. Pick $x\geq 0$, then by \eqref{MarkovianPb1} we have $\e^x\leq v(t,x)\leq \e^x\E\left[e^{S_{1-t}}\right]$ which implies \eqref{eq:discont1} by dominated convergence and using that $S_{1-t}\to0$ as $t\to 1$. If $x<0$, instead, the sub-optimal strategy $\tau=1-t$ gives $v(t,x)\geq 1$. Hence, $\liminf_{t\to 1} v(t,x)\geq 1> \e^x=v(1,x)$ as in \eqref{eq:discont2}.	
\end{proof}

As a corollary of the two propositions just stated, we have that $\cC$ is an open set. Combining this fact with the martingale property (in $\cC$) of the value function, we obtain that $v\in C^{1,2}(\cC)$ and it solves the free boundary problem (see, e.g., arguments as in the proof of Theorem 7.7 in Chapter 2 Section 7 of \cite{karatzas1998methods})
\begin{align}
\label{freeb1}\left(\partial_t +\tfrac{1}{2}\partial_{xx}-\tfrac{x}{1-t}\partial_x\right) v(t,x) &= 0,\qquad\: (t,x)\in\cC\\
\label{freeb2}v(t,x) &= \e^x,\qquad(t,x)\in\partial\cC,
\end{align} 
where $\partial_t$, $\partial_x$ and $\partial_{xx}$ denote the time derivative, the first spatial derivative and the second spatial derivative, respectively.

For future reference, we also denote by $\cL$ the second order differential operator associated with $X$. That is
\begin{align}\label{eq:L}
(\cL f)(t,x):=\tfrac{1}{2}\partial_{xx}f(t,x)-\tfrac{x}{1-t}\partial_xf(t,x),\qquad\text{for any $f\in C^{0,2}(\R^2)$}.
\end{align}

\subsection{Existence of an optimal boundary}
In order to prove the existence of an optimal boundary it is convenient to perform a change of measure in our problem formulation \eqref{MarkovianPb}. In particular, using the integral form of \eqref{MarkovX} (upon setting $B_\tau:= W_{t+\tau}-W_t$), we have
\begin{align*}
\E\left[\exp(X^{t,x}_{t+\tau})\right] &= \E\left[\exp\left(x+B_\tau-\int_0^\tau\frac{X^{t,x}_{t+s}}{1-(t+s)}\, \mathrm{d} s\right)\right] \\
&= \e^x\E\left[\exp\big(B_\tau-\tfrac 1 2 \tau\big)\exp\left(\int_0^\tau \left(\frac 1 2 -\frac{X^{t,x}_{t+s}}{1-(t+s)}\right)\mathrm{d} s\right)\right]\\
&=\e^x\EE\left[\exp\left(\int_0^\tau \left(\frac 1 2 -\frac{X^{t,x}_{t+s}}{1-(t+s)}\right)\mathrm{d} s\right)\right],
\end{align*}
where
\[
\frac{\ud\PP}{\ud\P}\Big|_{\cF_{1}}:=\exp\left(B_{1-t}-\frac {1} {2} (1-t)\right),
\]
defines a new equivalent probability measure $\PP$ on $(\Omega,\cF)$ and the associated expected value $\EE$. Under $\PP$, we have
\begin{align}\label{MarkovXTilde}
\ud X^{t,x}_s=\left(1-\frac{X^{t,x}_s}{1-s}\right)\mathrm{d} s+\mathrm{d} \widetilde{W}_s, \qquad\text{for $s\in[t,1]$},
\end{align}
with $X^{t,x}_t=x$, and with $\widetilde W_t:=W_t-t$ defining a $\PP$-Brownian motion by Girsanov's theorem.

Thanks to this transformation of the expected payoff, it is clear that solving problem \eqref{MarkovianPb} is equivalent to solving
\begin{equation}\label{MarkovianPbTilde}
\tilde{v}(t,x):=\sup_{0\leq \tau \leq 1-t}\EE\left[\exp\left(\int_0^\tau \left(\frac 1 2 -\frac{X^{t,x}_{t+s}}{1-(t+s)}\right)\mathrm{d} s\right)\right].
\end{equation}
Notice that, indeed, $v(t,x)=\e^x\tilde{v}(t,x)$ implies that 
\[
\cC=\{(t,x)\in[0,1]\times\R : \: \tilde{v}(t,x)>1\}.
\]
Moreover, since $V$ is a $\P$-super-martingale and $V^*$ is a $\P$-martingale then, as a consequence of Girsanov theorem, the process $\widetilde{V}:=(\widetilde{V}_t)_{t\in[0,1]}$ defined as
\begin{align}\label{eq:vtilde}
\widetilde V_t:=\exp\left(\int_0^t \left(\frac 1 2 -\frac{X_{s}}{1-s}\right)\mathrm{d} s\right)\tilde v(t,X_t)
\end{align}
is a $\PP$-super-martingale and $\widetilde V^*:=(\widetilde{V}_{t\wedge\tau^*})_{t\in[0,1]}$ is a $\PP$-martingale, with $\tau^*$ as in \eqref{Tau*}.

Using this formulation, we can easily obtain the next result.
\begin{proposition}
There exists a function $b:[0,1]\to\R_+$ such that 
\begin{align}\label{eq:bC}
\cC=\{(t,x)\in[0,1)\times\R\,:\,x<b(t)\}.
\end{align}
\end{proposition}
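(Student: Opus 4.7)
\emph{Plan.} I work with the transformed value $\tilde v$ in \eqref{MarkovianPbTilde}: under $\PP$ the SDE \eqref{MarkovXTilde} is affine in the starting point, which yields a clean monotonicity of $\tilde v(t,\cdot)$ in $x$ and, combined with the trivial bound $\tilde v\ge 1$, forces $\cD(t)$ to be an upper half-line. For the monotonicity, subtracting two solutions of \eqref{MarkovXTilde} started at the same time $t$ from $x_1>x_2$ and using uniqueness gives the pathwise identity
\[
X^{t,x_1}_s-X^{t,x_2}_s=(x_1-x_2)\,\frac{1-s}{1-t},\qquad s\in[t,1);
\]
since $y\mapsto \tfrac12-y/(1-(t+s))$ is strictly decreasing, the integrand in \eqref{MarkovianPbTilde}, its exponential, its $\EE$-expectation and the resulting $\tilde v(t,\cdot)$ (after $\sup_\tau$) are all non-increasing in $x$.

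Next, setting $b(t):=\inf\{x\in\R:\tilde v(t,x)=1\}$ with $\inf\emptyset:=+\infty$, the level set $\{\tilde v(t,\cdot)=1\}$ is up-closed in $x$: since $\tilde v\ge 1$ (attained at $\tau=0$) and is non-increasing, $\tilde v(t,x_0)=1$ implies $1\le\tilde v(t,x)\le\tilde v(t,x_0)=1$ for every $x\ge x_0$. Hence $\cD(t)=[b(t),\infty)$ and $\cC(t)=(-\infty,b(t))$, which is the asserted shape. For the non-negativity, given $x<0$ and $t\in[0,1)$ the sub-optimal rule $\tau=1-t$ gives $v(t,x)\ge \E_{t,x}[e^{X_1}]=1>e^x$, hence $(t,x)\in\cC$ and $b(t)\ge 0$.

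Finally, to see $b(t)<+\infty$, a direct computation gives $(\partial_t+\cL)e^x=e^x\bigl(\tfrac12-x/(1-t)\bigr)$, which is strictly negative on $\{x>(1-t)/2\}$, so $e^x$ is super-harmonic there. By It\^o's formula and standard localisation, $s\mapsto e^{X^{t,x}_{t+s}}$ is a $\P$-supermartingale up to the exit time $\sigma^*:=\inf\{s\ge 0:X^{t,x}_{t+s}\le(1-(t+s))/2\}$, yielding $\E[e^{X^{t,x}_{t+\tau}}]\le e^x$ for every stopping time $\tau\le\sigma^*$. For stopping times crossing $\sigma^*$, the strong Markov property together with the a priori bound $v(t',y)\le c_1\e^{1/2}$ for $y\in[0,1/2]$ (from \eqref{eq:vb}) keeps the continuation value at $\sigma^*$ uniformly bounded, and one argues that for $x$ sufficiently large no admissible rule beats $\tau=0$, so $v(t,x)=\e^x$ and $b(t)<+\infty$. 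The technical crux of the proof lies in this last step: combining the super-martingale estimate on $[0,\sigma^*]$ with the post-$\sigma^*$ bound naturally yields only $v(t,x)\le \e^x+C$ with $C$ independent of $x$, so promoting this to equality at some finite threshold requires exploiting the dichotomy that, once $\e^x$ dominates the constant continuation-value bound, any strategy that waits past $\sigma^*$ becomes strictly sub-optimal.
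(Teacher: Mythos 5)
Your core argument matches the paper's exactly: monotonicity of $x\mapsto\tilde v(t,x)$ (via the pathwise ordering of $X^{t,x}$ in $x$, which you make explicit through the affine identity $X^{t,x_1}_s-X^{t,x_2}_s=(x_1-x_2)(1-s)/(1-t)$, while the paper invokes the comparison principle directly), the observation that $\tilde v\ge 1$ with $\{\tilde v(t,\cdot)=1\}$ being up-closed, the definition of $b(t)$ as the edge of this level set, and the bound $b(t)\ge 0$ from the $\tau=1-t$ rule. That part is correct and essentially identical to the paper. Note also that the paper remarks that \eqref{eq:bC} uses continuity of $v$; your argument implicitly does the same when passing from ``up-closed level set'' to the strict-inequality description $\cC(t)=(-\infty,b(t))$, so you should make that appeal explicit.

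Where you diverge is in trying to prove $b(t)<+\infty$ inside this proposition. The paper does not do this here: finiteness is deferred to Lemma~\ref{lemma:bfiniteafter0} and Lemma~\ref{lemma:bfiniteat0}, and Corollary~\ref{cor:brc} explicitly hedges with ``whenever finite''. The paper's finiteness proof is a substantially different and more delicate argument: it first needs monotonicity of $t\mapsto b(t)$ (Theorem~\ref{DecreasingBarrier}), right-continuity of $b$, and the fact that $\cD\cap([t,1)\times\R)\neq\varnothing$, in order to locate a finite level $b_0=b(t')$ above which a martingale decomposition can be sent to a contradiction as $x\to\infty$. Your supermartingale-until-$\sigma^*$ argument is a reasonable first idea, but as you yourself acknowledge it only yields $v(t,x)\le e^x+C$ with $C$ independent of $x$, and the claimed ``dichotomy'' does not close the gap: the post-$\sigma^*$ continuation value being $O(1)$ does not force $\tau^*=0$, because the supermartingale estimate controls only the \emph{combined} payoff $\E[e^{X_{\tau\wedge\sigma^*}}]$ and not the individual pieces, and the Brownian bridge \emph{always} reaches the curve $(1-t-s)/2$ before time $1$, so $\P(\tau\ge\sigma^*)$ cannot be made small. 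As written, this last step is a genuine gap; if you intend to prove finiteness here you would need the machinery the paper develops in Section~\ref{sec:freeb}. If instead you simply state, as the paper does, that $b$ takes values in $[0,+\infty]$ at this stage and establish finiteness later, the rest of your proof is sound.
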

\begin{proof}
Thanks to the pathwise uniqueness of the Brownian bridge, it is clear that for any $x\leq x'$ we have, $\P$-a.s.~(hence also $\PP$-a.s.) 
\[
X_s^{t,x}\leq X_s^{t,x'}, \quad \text{for all $s\in[t,1]$}.
\]
Using such comparison principle and \eqref{MarkovianPbTilde}, it is easy to show that $x\mapsto \tilde{v}(t,x)$ is non-increasing. This means, in particular, that if $(t,x)\in\cD$, then $(t,x')\in\cD$ for all $x'\ge x$. Then, setting $b(1) := 0$, we define 
\begin{align}\label{Boundary}
 b(t) :=&\sup\{x\in\R: \: \tilde{v}(t,x)>1\}\\
=&\sup\{x\in\R: \: v(t,x)>\e^x\}, \quad t\in[0,1),\notag
\end{align}
and \eqref{eq:bC} holds by continuity of the value function. For future reference, notice that \eqref{Boundary} and \eqref{eq:discont1}--\eqref{eq:discont2} give also $b(1)=0$.

It remains to show that $b(t)\ge 0$ for all $t\in[0,1]$. By choosing the stopping rule $\tau=1-t$, one has $v(t,x)\geq 1>\e^x$ for $x<0$ and any $t\in[0,1)$. Hence, 
\[
[0,1)\times(-\infty,0)\subset\cC,
\] 
and the claim follows.
\end{proof}
As a straightforward consequence of the proposition above and \eqref{Tau*}, we have
\begin{equation}\label{eq:tau*}
\tau^*_{t,x}=\inf\{s\in[0,1-t]\,:\, X^{t,x}_{t+s}\geq b(t+s)\}.
\end{equation}

\section{Regularity of the optimal boundary}\label{sec:freeb}

In this section we show that the optimal boundary is monotonic, continuous and bounded. We will then use these properties to derive smoothness of the value function across the optimal boundary, in the next section. 

By an application of Dynkin's formula we know that, given any initial condition $(t,x)\in[0,1)\times\R$, any stopping time $\tau\in[0,1-t]$ and a small $\delta>0$ we have
\begin{align}
v(t,x)\ge\E_{t,x}\Big[\e^{X_{t+\tau\wedge \delta}}\Big]=\e^x+\E_{t,x}\left[\int_0^{\tau\wedge \delta}\e^{X_{t+s}}\left(\frac{1}{2}-\frac{X_{t+s}}{1-(t+s)}\right)\ud s\right].
\end{align}
This shows that immediate stopping can never be optimal inside the set
\begin{align}\label{eq:Q}
\cQ:=\left\{(t,x)\in[0,1)\times\R: x<\tfrac 1 2 (1-t)\right\},
\end{align}
and so $\cQ\subseteq\cC$.

The next result, concerning monotonicity of the optimal boundary, is crucial for the subsequent analysis of the stopping set and of the value function. Monotonicity of optimal boundaries is relatively easy to establish in optimal stopping problems when the underlying diffusion is time-homogeneous and the gain function is independent of time. In our case, the latter condition holds but our diffusion is time-dependent, hence new ideas are needed in the proof of the theorem below. We also remark that, while in some stopping problems of a Brownian bridge (see, e.g., \cite{ekstrom2009optimal}) it is possible to rely upon a time-change in order to formulate an auxiliary equivalent stopping problem for a time-homogeneous diffusion (see \cite{pederson2000solving}), this is not the case here, due to the exponential nature of the gain function.
\begin{theorem}\label{DecreasingBarrier}
	The optimal boundary $t\mapsto b(t)$ is non-increasing on $[0,1]$. 
\end{theorem}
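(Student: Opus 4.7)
My plan is to deduce monotonicity of $b$ from monotonicity of the value function $v(t,x)$ in $t$ on the half-plane $x \geq 0$. Concretely, I will show that $t \mapsto v(t,x)$ is non-increasing on $[0,1]$ whenever $x \geq 0$. Given this, pick $0 \leq t_1 < t_2 \leq 1$ and any $x > b(t_1)$; since $b \geq 0$, such $x$ is non-negative, so $\e^x = v(t_1,x) \geq v(t_2,x) \geq \e^x$ forces $v(t_2,x) = \e^x$ and hence $x \geq b(t_2)$. Letting $x \downarrow b(t_1)$ yields $b(t_2) \leq b(t_1)$.

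The main ingredient is formulation \eqref{MarkovianPb2}, in which $t$ enters only through the factor $\sqrt{1-t}$ multiplying $W_\theta$, while the class of admissible stopping times is independent of $t$. The cleanest way to exploit this is to change variable $s := \sqrt{1-t} \in [0,1]$ and set $V(s,x) := v(1-s^2,x)$. For each stopping time $\theta \in [0,1]$, the payoff
\[
f_\theta(s) := \E\bigl[\exp\bigl((1-\theta)x + s\sqrt{1-\theta}\,W_\theta\bigr)\bigr]
\]
is convex in $s$, since differentiating under the expectation (justified by \eqref{eq:ui}) gives $f_\theta''(s) = \E\bigl[(1-\theta)W_\theta^2 \exp\bigl((1-\theta)x + s\sqrt{1-\theta}\,W_\theta\bigr)\bigr] \geq 0$. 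A pointwise supremum of convex functions is convex, so $V(\cdot,x) = \sup_\theta f_\theta$ is convex on $[0,1]$.

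It remains to locate the minimum of $V(\cdot,x)$. Choosing the sub-optimal rule $\theta \equiv 0$ yields $V(s,x) \geq \e^x$ for every $s \in [0,1]$, and at $s = 0$ (i.e.\ $t=1$) the supremum in \eqref{MarkovianPb2} reduces to $\sup_\theta \e^{(1-\theta)x}$, which equals $\e^x$ for $x \geq 0$ (consistently with Proposition~\ref{prop:cont-t} and \eqref{eq:discont1}). Thus $V(\cdot,x)$ attains its minimum at $s=0$, and a short convexity argument shows that a convex function on $[0,1]$ whose minimum is at the left endpoint is automatically non-decreasing. Consequently $s \mapsto V(s,x)$ is non-decreasing, equivalently $t \mapsto v(t,x)$ is non-increasing, which closes the argument.

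The main obstacle is conceptual rather than technical: as the authors emphasise, there is no obvious pathwise coupling of $X^{t_1,x}$ and $X^{t_2,x}$, so any direct sample-path comparison fails. Representation \eqref{MarkovianPb2} sidesteps this difficulty by decoupling the $t$-dependence from the driving Brownian motion, turning the comparison into an analytic question about a convex function of the scalar parameter $s=\sqrt{1-t}$. Once this viewpoint is in place, the remaining ingredients---convexity of $f_\theta$, sup-preservation of convexity, and monotonicity of a convex function whose infimum lies at the left endpoint---are elementary.
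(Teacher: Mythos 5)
Your proof is correct and takes a genuinely different, and arguably cleaner, route than the paper's. The paper's argument makes no use of convexity in the rescaled time variable; it derives $\partial_t v\le \frac{x}{1-t}\partial_x v$ from \eqref{freeb1} and spatial convexity (which gives $\partial_t v\le 0$ for $x\le 0$ inside $\cC$), and then, for $x>0$, builds a pathwise coupling between $X^{t-\eps,x}$ and $X^{t,x}$ up to the first hitting time of zero by the latter (Step~1 of the proof), combining the resulting pointwise ordering with the (super)martingale property of the value process to deduce $\partial_t v\le 0$. Your argument sidesteps the coupling entirely: in \eqref{MarkovianPb2} the time $t$ enters only through the scalar $\sqrt{1-t}$ multiplying $W_\theta$, while the class of stopping rules $\theta$ is $t$-free, so for each $\theta$ the payoff $f_\theta(s)=\E\big[\exp\big((1-\theta)x+s\sqrt{1-\theta}\,W_\theta\big)\big]$ is convex in $s$ (differentiation under the expectation is licensed by $|W_\theta|\le S_1$ and the integrability behind \eqref{eq:c1}; H\"older applied to the convex split of $s$ gives it directly as well), hence $V(\cdot,x)=\sup_\theta f_\theta(\cdot)$ is convex with minimum at $s=0$ when $x\ge 0$, and is therefore non-decreasing; since $b\ge 0$, this suffices to conclude exactly as you do. The only slip is cosmetic: at $s=0$ the supremum is $\sup_\theta\E\big[\e^{(1-\theta)x}\big]$, not $\sup_\theta\e^{(1-\theta)x}$, though its value is still $\e^x$ for $x\ge 0$. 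Your route buys brevity and elementarity, and it shows that the time-change idea the paper attributes (in the linear-payoff setting) to earlier work extends to the exponential payoff via convexity rather than linearity; the trade-off is that it is tied to the specific scaling structure visible in \eqref{MarkovianPb2}, whereas the paper's coupling technique, while heavier, is advertised in the introduction as portable to other time-inhomogeneous diffusions lacking such a reparametrisation.
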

\begin{proof}
It is sufficient to show that, for any fixed $x\in\R$, the map $t\mapsto v(t,x)$ is non-increasing on $[0,1)$. Indeed the latter implies monotonicity of the boundary on $[0,1]$ by definition \eqref{Boundary} and using that $b(t)\geq 0$ for all $t\in[0,1)$ and $b(1)=0$.
	
Recalling \eqref{freeb1} and using convexity of $x\mapsto v(t,x)$, we obtain 	
\begin{equation}\label{BoundVt}
\partial_t v(t,x)\leq \frac{x}{1-t}\partial_x v(t,x), \quad \text{for all } (t,x)\in \cC,
\end{equation}
and, in particular, 
\begin{align}\label{eq:dtv0}
\partial_t v(t,x)\leq 0,\quad\text{for all $(t,x)\in[0,1)\times(-\infty,0]$,}
\end{align}
thanks to the fact that $\cQ\subseteq \cC$ (see \eqref{eq:Q}) and $\partial_x v\ge 0$ in $\cC$ (Proposition \ref{xVcontinuity}).
	
Notice that if $(t,x)\in \cD\setminus\partial\cC$ then $v(t,x)=\e^x$ and $\partial_t v(t,x)=0$. Since $t\mapsto v(t,x)$ is continuous on $[0,1)$, it only remains to prove that $\partial_t v(t,x)\leq 0$ for $(t,x)\in \cC$ with $x>0$. For that we proceed in two steps.
\vspace{+4pt}

{\em Step 1}. (Property of $t\mapsto X^{t,x}$). Consider $(t,x)\in \cC$ with $x>0$ and $0<\eps\leq t<1$, for some $\eps>0$. For $s\in[0,1-t]$ we denote
\begin{align}\label{eq:Y0}
Y^{t,x;\eps}_{t+s}:= X^{t,x}_{t+s}-X^{t-\eps,x}_{t-\eps+s}.
\end{align}
Since $(t,x)$ is fixed, we simplify the notation and set $Y^\eps_{t+s}:=Y^{t,x;\eps}_{t+s}$, for $s\in[0,1-t]$. Next, for some small $\delta>0$, we let $t_\delta:=(1-t-\delta)>0$ and $\rho_\delta:=t_\delta\wedge\tau^0$, where $\tau^0:=\tau^0_{t,x}:=\inf\{u\in[0,1-t]\, :\, X_{t+u}^{t,x}\leq 0\}$. Then, using the integral form of \eqref{MarkovX}, for an arbitrary $s\in[0,1-t]$ we have, $\P$-a.s.
	\begin{align}\label{eq:Y}
	Y^\eps_{t+s\wedge\rho_\delta}	&= -\int_0^{s\wedge\rho_\delta} \frac{X^{t,x}_{t+u}}{1-(t+u)}\mathrm{d} u+\int_0^{s\wedge\rho_\delta} \frac{X^{t-\eps,x}_{t-\eps+u}}{1-(t-\eps+u)}\mathrm{d} u\\
	&=-\int_0^{s\wedge\rho_\delta}\left( \frac{\eps X^{t,x}_{t+u}}{(1-(t-\eps+u))(1-(t+u))}+\frac{Y^{\eps}_{t+u}}{1-(t-\eps+u)}\right)\mathrm{d} u. \nonumber
	\end{align}

Let $[x]^+:=\max\{0,x\}$. Since $Y^\eps$ is a continuous process of bounded variation and $Y^\eps_0=0$, we have
\begin{align}
[Y^\eps_{t+s\wedge\rho_\delta}]^+=\int_0^{s\wedge\rho_\delta}\ind_{\{Y^\eps_{t+u}\ge 0\}}\ud Y^\eps_{t+u}\le 0
\end{align}	
where the final inequality follows from \eqref{eq:Y}, upon observing that $X^{t,x}_{t+u}\geq 0$ for all $u\leq\rho_\delta$.
Then, $Y^{\eps}_{t+s\wedge \rho_\delta}\leq 0$ for all $s\in[0,1-t]$. Furthermore, letting $\delta\to0$, we obtain by continuity of paths
	\begin{equation}\label{OrderOptStoppTimes}
X^{t-\eps,x}_{t-\eps+s\wedge \tau^0}\geq X^{t,x}_{t+s\wedge \tau^0}\ge 0,\quad\text{ for all } s\in[0,1-t], \: \P\text{-a.s.}
	\end{equation}
Hence, the process $X^{t,x}$ hits zero before the process $X^{t-\eps,x}$ does.
\vspace{+4pt}

{\em Step 2.} ($\partial_t v(t,x)\le 0$).	Fix $(t,x)\in\cC$ with $x>0$. Using the same notation as in step 1 above, let $\sigma:=\tau^*_{t,x}\wedge \tau^0_{t,x}$. By the (super)martingale property of the value function, noticing that $\tau^*$ is optimal in $v(t,x)$ and sub-optimal in $v(t-\eps,x)$ we have 
	\begin{align}\label{eq:vt1}
	&v(t,x)-v(t-\eps,x) \\
	&\leq \E\left[v(t+\sigma,X^{t,x}_{t+\sigma})-v(t-\eps+\sigma, X^{t-\eps,x}_{t-\eps+\sigma})\right]\notag \\
	&\leq\E\left[\ind_{\{\tau^*\leq\tau^0\}\cap\{\tau^*<1-t\}}\left(\exp(X^{t,x}_{t+\tau^*})-\exp(X^{t-\eps,x}_{t-\eps+\tau^*})\right)\right]\notag \\
	& \quad +\E\left[\ind_{\{\sigma=1-t\}}\left(\exp(X^{t,x}_{1})-\exp(X^{t-\eps,x}_{1-\eps})\right)\right]\notag \\
	& \quad +\E\left[\ind_{\{\tau^0<\tau^*\}\cap\{\tau^0<1-t\}}\left(v(t+\tau^0,0)-v(t-\eps+\tau^0,X_{t-\eps+\tau^0}^{t-\eps,x})\right)\right]. \notag
	\end{align}
Recalling \eqref{OrderOptStoppTimes}, on the event $\{\tau^*\leq\tau^0\}\cap\{\tau^*<1-t\}$ we have $X^{t-\eps,x}_{t-\eps+\tau^*}\geq X^{t,x}_{t+\tau^*}$ and on the event $\{\sigma=1-t\}$ we have that $X^{t-\eps,x}_{1-\eps}\geq X^{t,x}_{1}$. Moreover, $x\mapsto v(t,x)$ is non-decreasing (Proposition \ref{xVcontinuity}). Thus, combining these facts with \eqref{eq:vt1}, we obtain
\begin{align}\label{eq:vt2}
&v(t,x)-v(t-\eps,x) \\
&\leq\E\left[\ind_{\{\tau^0<\tau^*\}\cap\{\tau^0<1-t\}}\left(v(t+\tau^0,0)-v(t-\eps+\tau^0,0)\right)\right]\leq 0,\notag
\end{align}
where the final inequality uses \eqref{eq:dtv0} and the fact that $\tau^0<1-t$.
	
Finally, dividing both sides of \eqref{eq:vt2} by $\eps$ and letting $\eps\to 0$, we obtain $\partial_t v(t,x)\le 0$ as needed.
\end{proof}

It is well-known in optimal stopping theory that monotonicity of the boundary leads to its right-continuity (or left-continuity). In our case we have a simple corollary.
\begin{corollary}\label{cor:brc}
The boundary is right-continuous, whenever finite.
\end{corollary}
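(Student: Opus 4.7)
The plan is to exploit the two properties already established: monotonicity of $b$ (Theorem \ref{DecreasingBarrier}) and openness of the continuation region $\cC$ (a corollary of continuity of $v$, Propositions \ref{xVcontinuity} and \ref{prop:cont-t}). Fix $t\in[0,1)$ at which $b(t)<\infty$. Because $b$ is non-increasing, the right-limit
\[
b(t+):=\lim_{s\downarrow t}b(s)
\]
exists in $[0,b(t)]$, and in particular $b(t+)\le b(t)$. The task is to show the reverse inequality.

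For the reverse inequality, I would pick an arbitrary $x<b(t)$. By the characterisation \eqref{eq:bC}, the point $(t,x)$ lies in $\cC$. Since $\cC$ is open, there is $\eps_0>0$ such that $(t+\eps,x)\in\cC$ for all $\eps\in[0,\eps_0)$, which means $x<b(t+\eps)$ for such $\eps$. Letting $\eps\downarrow 0$ yields $x\le b(t+)$. Since $x<b(t)$ was arbitrary, $b(t)\le b(t+)$, and combined with the monotonicity bound we conclude $b(t+)=b(t)$.

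I expect no real obstacle: the argument is essentially a two-line consequence of monotonicity plus openness of $\cC$, and it is the standard way in optimal stopping theory to upgrade monotonicity to one-sided continuity. The only minor point to watch is the qualifier \emph{whenever finite}: at this stage of the paper the boundary has not yet been shown to be bounded, so if $b(t)=+\infty$ the argument above is vacuous on the $\le$ side; however, for $t$ at which $b(t)<\infty$ the reasoning applies verbatim, giving exactly what the corollary claims. (Boundedness of $b$ will be established separately in Section \ref{sec:freeb}, at which point the qualifier becomes superfluous.)
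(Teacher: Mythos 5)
Your proof is correct and is essentially the same argument as the paper's, just phrased dually: the paper fixes the boundary points $(t_n,b(t_n))\in\cD$ with $t_n\downarrow t$ and uses closedness of $\cD$ to conclude $(t,b(t+))\in\cD$, hence $b(t+)\ge b(t)$, whereas you fix a sub-boundary point $(t,x)\in\cC$ and use openness of $\cC$ to push it slightly to the right. Both reduce to monotonicity of $b$ plus the topological dichotomy $\cC$ open / $\cD$ closed, and both are sound.
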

\begin{proof}
Let $t\in[0,1)$ be such that $b(t)<+\infty$. Consider a sequence $(t_n)_{n\in \N}$ such that $t_n\downarrow t$ as $n\to\infty$. By monotonicity of $b$ and \eqref{eq:bC}, we have that $b(t_n)<\infty$ and $(t_n,b(t_n))\in \cD$ for all $n\in \N$. Since $\cD$ is a closed set and $(t_n,b(t_n))\to (t,b(t+))$, then also $(t,b(t+))\in \cD$ (the right-limit $b(t+)$ exists by monotonicity). Hence, $b(t+)\geq b(t)$ (see \eqref{eq:bC}). However, by monotonicity $b(t)\geq b(t+)$, which leads to $b(t)=b(t+)$.
\end{proof}

We can now show that the optimal boundary is continuous and bounded on $[0,1]$.
\begin{proposition}\label{prop:bfinite}
The optimal boundary $t\mapsto b(t)$ is continuous on $[0,1]$ and we have
\begin{equation}\label{bBounded}
\sup_{t\in[0,1]} b(t)<+\infty.
\end{equation}
\end{proposition}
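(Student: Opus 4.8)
The plan is to split the statement into the two claims—boundedness \eqref{bBounded} and continuity on $[0,1]$—and to attack boundedness first, since continuity away from $t=1$ will follow from monotonicity plus a standard argument, and continuity at $t=1$ will follow once we know $\sup b<\infty$ together with $b(1)=0$ and right-continuity (Corollary \ref{cor:brc}).

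\emph{Step 1: boundedness.} I would work with the transformed problem \eqref{MarkovianPbTilde}, where $(t,x)\in\cC$ iff $\tilde v(t,x)>1$, i.e. iff there is a stopping time $\tau\le 1-t$ with
\[
\EE\left[\exp\left(\int_0^\tau\Big(\tfrac12-\frac{X^{t,x}_{t+s}}{1-(t+s)}\Big)\ud s\right)\right]>1.
\]
The key observation is that, starting from a large $x>0$ at time $t$, the drift of $X$ under $\PP$ is strongly negative: from \eqref{MarkovXTilde}, $X^{t,x}$ moves like a mean-reverting process toward level $1-s$, so the integrand $\tfrac12-X_{t+s}/(1-(t+s))$ is very negative for a macroscopic amount of time with high probability. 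More precisely, I would fix a small time horizon $h>0$, bound $X^{t,x}_{t+s}$ below over $s\in[0,h]$ by a quantity of order $x(1-t-h)/(1-t)$ minus a Brownian-type fluctuation, and show that for $x$ large enough the exponent $\int_0^\tau(\cdots)\ud s$ is bounded above by a negative constant with probability bounded away from $0$, uniformly in $t\in[0,1]$ and uniformly over all $\tau$; combined with the uniform upper bound $\tilde v\le c_1$ (from \eqref{eq:vb}, since $v(t,x)=\e^x\tilde v(t,x)$), this forces $\tilde v(t,x)\le 1$, hence $(t,x)\in\cD$. A cleaner route to the same conclusion: it suffices to exhibit a single level $x_0$ such that immediate stopping beats continuation for every $t$. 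For this I would show that for $x$ large the process $s\mapsto \e^{X^{t,x}_{t+s}}$ is a supermartingale up to the hitting time of $\{x_0\}$ uniformly in $t$—using that the infinitesimal generator applied to $\e^x$ gives $\e^x(\tfrac12-\tfrac{x}{1-t})<0$ when $x>\tfrac12(1-t)$, and that $X$ stays above $x_0$ long enough—so that $v(t,x)=\e^x$ there. Either way one gets $\sup_{t\in[0,1]}b(t)\le x_0<\infty$. I expect this to be the main obstacle, because one must control the Brownian-bridge paths started from different times $t$ simultaneously and near $t=1$ the coefficient $1/(1-t)$ blows up; the saving grace is precisely that this blow-up makes the drift push down even harder, so a careful estimate using \eqref{MarkBrownBridge1} (which rescales everything by factors $\sqrt{(1-s)/(1-t)}\le 1$) should close the argument uniformly.

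\emph{Step 2: continuity on $[0,1)$.} Given monotonicity (Theorem \ref{DecreasingBarrier}) and boundedness, the boundary has left and right limits everywhere; right-continuity is Corollary \ref{cor:brc}. It remains to rule out a downward jump, i.e. $b(t-)>b(t)$ for some $t\in(0,1)$. This is the classical argument: suppose $b(t-)>b(t)$ and pick $x$ with $b(t)<x<b(t-)$, so that the open rectangle $R=(t-\eta,t)\times(x',x'')$ for suitable $b(t)<x'<x<x''<b(t-)$ lies in the continuation set $\cC$, while its closed bottom edge $\{t\}\times[x',x'']$ lies in $\cD$ (by right-continuity and $b(t)<x'$). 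On $\cC$ the function $v$ solves $(\partial_t+\cL)v=0$ with $\cL$ as in \eqref{eq:L}; on $\cD$ near this edge $v=\e^x$. I would apply the standard comparison/Green's-identity argument (as in, e.g., the proof of continuity of free boundaries in Peskir–Shiryaev): integrate $(\partial_t+\cL)(v-\e^x)=(\partial_t+\cL)v-\cL\e^x = -\e^x(\tfrac12-\tfrac{x}{1-t})$ over $R$ against a fixed nonnegative test function $\varphi$ supported in $(x',x'')$, let $\eta\downarrow0$, use $v-\e^x\ge0$ with $v-\e^x=0$ on the bottom edge together with smooth fit (the $C^1$ regularity is proven later, but one only needs $\partial_x v=\e^x$ on the edge, which already follows here from convexity of $v(t,\cdot)$ and $v\ge\e^x$), and derive the contradiction that the left-hand side tends to a strictly negative quantity (since $x'>b(t)\ge0$ forces $\tfrac12-\tfrac{x}{1-t}$ to be bounded away from... — here I would instead choose $x$ possibly with $\tfrac12-\tfrac{x}{1-t}$ of either sign and get the contradiction from the boundary terms: the outward normal derivative of $v-\e^x$ along the bottom edge is $\ge0$ with a strict sign on a set of positive measure, contradicting the sign forced by the PDE). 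The cleanest packaging is the one in \cite{ekstrom2009optimal}/Peskir: a jump would make $v$ fail to be $C^1$ across the edge in a way incompatible with $v$ solving the heat-type equation up to the edge.

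\emph{Step 3: continuity at $t=1$.} Finally, $b(1)=0$ was already recorded after \eqref{Boundary}. By monotonicity and $b\ge0$ on $[0,1)$, the limit $b(1-):=\lim_{t\to1}b(t)\ge0$ exists. Suppose $b(1-)=2\beta>0$. Then for all $t$ close to $1$ the point $(t,\beta)\in\cC$, so $v(t,\beta)>\e^\beta$; but by \eqref{eq:discont1} (with $x=\beta>0$) we have $v(t,\beta)\to\e^\beta$ as $t\to1$, and more is true—the sub-optimality estimate $v(t,\beta)\le\e^\beta\,\E[\e^{S_{1-t}}]$ from \eqref{MarkovianPb1} shows $v(t,\beta)-\e^\beta\le\e^\beta(\E[\e^{S_{1-t}}]-1)\to0$. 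To turn this into a contradiction I would note that $(t,\beta)\in\cC$ with the continuation region contained in $\{x<b(t)\}$ forces, via the optimality of $\tau^*$ and a small-time expansion, a lower bound on $v(t,\beta)-\e^\beta$ that does not vanish as $t\to1$—indeed starting from $\beta<\tfrac12(1-t)$... no: for $t$ near $1$, $\beta>\tfrac12(1-t)$, so $\cQ$ does not help; instead I would use that on $(t,\beta)\in\cC$ one must be able to gain, and by \eqref{MarkovianPb2} the gain from $(t,\beta)$ with $t\to1$ is $\E[\exp((1-\theta)\beta+\sqrt{(1-\theta)(1-t)}W_\theta)]\to\E[\exp((1-\theta_*)\beta)]\le\e^\beta$ for any limiting $\theta_*\ge0$, with equality only if $\theta_*\equiv0$, i.e. only for immediate stopping—so the strict inequality $v(t,\beta)>\e^\beta$ cannot persist, contradiction. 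Hence $b(1-)=0=b(1)$, and continuity at $t=1$ holds; together with Step 2 this gives continuity on all of $[0,1]$.
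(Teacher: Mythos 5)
Your decomposition (boundedness first, then no downward jumps on $[0,1)$, then $b(1-)=0$) matches the paper's, but two of the three steps have genuine gaps. In Step 1 neither of your routes closes. Route (a): the claim that the exponent $\int_0^\tau(\tfrac12-X_{t+s}/(1-(t+s)))\,\mathrm{d}s$ is bounded above by a negative constant ``uniformly over all $\tau$'' is false for small $\tau$ (it vanishes at $\tau=0$, consistently with $\tilde{v}\ge1$ everywhere), and even granting negativity on an event of probability $p>0$, combining that with $\tilde{v}\le c_1$ does not force $\tilde v\le 1$: you would still need to bound $\EE[\ind_{A^c}\exp(\int_0^\tau\cdots)]$ by roughly $1-p$, and the integrand can be large when the path dips below $\tfrac12(1-t-s)$. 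Route (b): a supermartingale property of $s\mapsto\e^{X_{t+s}}$ up to the hitting time of level $x_0$ controls nothing for stopping times that continue past that hitting time; at the hitting time the relevant quantity is $v(t+\tau_{x_0},x_0)$, which exceeds $\e^{x_0}$ unless $x_0\ge b(t+\tau_{x_0})$ --- exactly what you are trying to prove, so the argument is circular. The paper avoids both problems by arguing by contradiction: if $b(t)=+\infty$, Lemma \ref{lemma:nonemptystop} and Corollary \ref{cor:brc} give $t'>t$ with $b(t')=b_0<\infty$ and $(t,t')\times\R\subseteq\cC$; the martingale property of $\widetilde V$ up to $\sigma_0=$ (hitting time of $b_0$) $\wedge\,(t'-t)$ plus the estimate $\tilde v(t+\sigma_0,b_0)\le c_1$ then yields $\limsup_{x\to\infty}\tilde v(t,x)\le0$, contradicting $\tilde v\ge1$. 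This still leaves $t=0$ (one cannot guarantee a strip $(0,t')\times\R\subseteq\cC$ if only $b(0)=+\infty$), which the paper handles by a time-shift to an auxiliary bridge pinned at $1+h$ (Lemma \ref{lemma:bfiniteat0}); your uniform-in-$t$ plan does not confront this.

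Step 2 is in the right spirit (it is essentially Lemma \ref{lemma:bleftcontinuous}), but you hesitate precisely at the fact that makes it work: since $\cQ\subseteq\cC$, one has $b(t_0)\ge\tfrac{1-t_0}{2}$, hence $\tfrac12-\tfrac{y}{1-t_0}<0$ for every $y>b(t_0)$, and after integrating by parts the contradiction $0\le\int\e^y(\tfrac12-\tfrac{y}{1-t_0})\varphi(y)\,\mathrm{d}y<0$ is immediate --- no smooth fit or normal-derivative bookkeeping is needed (only $\partial_t v\le0$ from Theorem \ref{DecreasingBarrier}). Step 3, however, is a non sequitur: $v(t,\beta)>\e^\beta$ for all $t<1$ together with $v(t,\beta)\to\e^\beta$ is not a contradiction (this coexistence occurs, e.g., throughout $\cQ$ near its upper boundary), and your limiting-$\theta_*$ computation only reproves $\limsup_{t\to1}v(t,\beta)\le\e^\beta$; it cannot rule out a strictly positive but vanishing gap. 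The paper instead reruns the Step-2 argument at $t_0=1$ with the modified test function $\tilde\varphi(t,x)=(1-t)\varphi(x)$, whose factor $(1-t)$ absorbs the blow-up of the coefficient $1/(1-t)$, leading to $0\le-\int\e^y y\varphi(y)\,\mathrm{d}y<0$ on any $[x_1,x_2]\subset(0,b(1-))$. You need a replacement for Step 3 of this kind.
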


The proof of Proposition \ref{prop:bfinite} relies upon four lemmas. First we state and prove those lemmas and then we prove the proposition.

\begin{lemma}\label{lemma:nonemptystop}
	For any $t\in[0,1)$ we have $\cD\cap ([t,1)\times \R)\neq\varnothing$.
\end{lemma}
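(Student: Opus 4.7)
The plan is to argue by contradiction. Fix $t\in[0,1)$ and assume that $\cD\cap([t,1)\times\R)$ is empty; I will derive a contradiction from this hypothesis.

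First I would note that, since $v(1,x)=\e^x$ trivially, the terminal slice $\{1\}\times\R$ is automatically contained in $\cD$. Under the contradiction hypothesis, the only points of $\cD$ with time-component in $[t,1]$ therefore lie on $\{1\}\times\R$. Consequently, using the representation \eqref{Tau*} of the optimal stopping time as the first entry of $(t+s,X^{t,x}_{t+s})$ into $\cD$, for every $x\in\R$ one has
\begin{equation*}
\tau^*_{t,x}=1-t\quad\text{$\P_{t,x}$-a.s.}
\end{equation*}
(the infimum cannot be attained before $s=1-t$ by hypothesis, and at $s=1-t$ we are in $\cD$ since $(1,X_1)=(1,0)\in\cD$).

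Next I would plug this into the optimal stopping value. Since $X_1=0$ $\P_{t,x}$-a.s.~by the pinning property of the Brownian bridge, optimality of $\tau^*$ gives
\begin{equation*}
v(t,x)=\E_{t,x}\big[\e^{X_1}\big]=1\qquad\text{for every }x\in\R.
\end{equation*}
On the other hand, choosing the deterministic stopping time $\tau=0$ gives the immediate lower bound $v(t,x)\ge\e^x$. Combining the two would yield $1\ge\e^x$ for all $x\in\R$, which is absurd (take any $x>0$). This contradicts the working hypothesis and completes the proof.

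There is essentially no conceptual obstacle here: the only point requiring a moment's care is the identification $\tau^*_{t,x}=1-t$ under the contradiction hypothesis, and this is immediate from the two-sided sandwich described above (the hypothesis forces $\tau^*\ge 1-t$, while $\{1\}\times\R\subseteq\cD$ forces $\tau^*\le 1-t$).
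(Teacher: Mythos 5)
Your proof is correct and follows essentially the same route as the paper: assume $\cD\cap([t,1)\times\R)=\varnothing$, deduce that $\tau^*_{t,x}=1-t$ so that $v(t,x)=1$ by the pinning property, and contradict this with $v(t,x)\ge\e^x>1$ for $x>0$. The only cosmetic difference is that the paper phrases the argument for a general $t'\in[t,1)$, while you fix $t'=t$; both are valid since the contradiction is already obtained at time $t$.
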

\begin{proof}
Suppose by contradiction that this is not true and there exists $t\in[0,1)$ such that $\cD\cap ([t,1)\times \R)=\varnothing$. Then $\tau^*_{t',x}=1-t'$, $\P$-a.s.~for all $(t',x)\in[t,1)\times\R$, which implies $v(t',x)=1$. This, however, leads to a contradiction since immediate stopping gives $v(t',x)\ge \e^x>1$ for $x>0$ and any $t'\in[t,1)$.
\end{proof}
Notice that the lemma above implies that for any $t_1\in[0,1)$ there exists $t_2\in(t_1,1)$ such that $b(t_2)<+\infty$. This fact will be used in the next lemma.

\begin{lemma}\label{lemma:bfiniteafter0}
The boundary satisfies $b(t)<+\infty$ for all $t\in(0,1]$.
\end{lemma}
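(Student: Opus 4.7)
The plan is to argue by contradiction using the quantity
\[
t^* := \inf\bigl\{t \in [0,1] : b(t) < +\infty\bigr\}.
\]
Since $b$ is non-increasing by Theorem \ref{DecreasingBarrier}, the set $\{t : b(t) < +\infty\}$ is upward-closed in $[0,1]$, and by the remark immediately following Lemma \ref{lemma:nonemptystop} (applied with $t_1 = 0$) it is non-empty. Hence $t^* \in [0,1)$, and the statement of the lemma is equivalent to $t^* = 0$.

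Suppose for a contradiction that $t^* > 0$, and fix any $t_1 \in (0, t^*)$. By the definition of $t^*$ together with monotonicity of $b$, we have $b(t) = +\infty$ for every $t \in [t_1, t^*)$, so $[t_1, t^*) \times \R \subseteq \cC$. Consequently, for every $x \in \R$ the optimal stopping time satisfies $\tau^*_{t_1,x} \geq t^* - t_1$, $\P_{t_1,x}$-a.s., because the time-space process $(t_1 + s, X_{t_1+s})$ cannot enter $\cD$ before time $t^*$.

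The key step is to exploit the martingale property of the stopped Snell envelope $V_{\cdot \wedge \tau^*}$ recorded in Section \ref{sec:PF}: optional sampling at the bounded deterministic time $s = t^* - t_1 \leq \tau^*_{t_1,x}$ yields
\[
v(t_1, x) = \E_{t_1,x}\bigl[v(t^*, X^{t_1, x}_{t^*})\bigr].
\]
Using $v(t^*, y) \leq c_1 \e^{|y|}$ from \eqref{eq:vb} together with the Gaussian law of $X^{t_1, x}_{t^*}$ coming from \eqref{MarkBrownBridge}, with mean $\mu = (1-t^*)x/(1-t_1)$ and variance $\sigma^2 = (1-t^*)(t^*-t_1)/(1-t_1)$, a Gaussian moment generating function computation gives, for $x > 0$ large enough that $\mu \geq 0$,
\[
v(t_1, x) \leq c_1\bigl(\e^{\mu + \sigma^2/2} + \e^{-\mu + \sigma^2/2}\bigr) \leq 2 c_1 \e^{\alpha x + \sigma^2/2},
\]
with $\alpha := (1-t^*)/(1-t_1) \in (0, 1)$, the strict inequality $\alpha < 1$ being due to $t^* > t_1$.

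Combining this estimate with the trivial lower bound $v(t_1, x) \geq \e^x$ and taking logarithms produces $(1-\alpha) x \leq \log(2c_1) + \sigma^2/2$, forcing $x$ to lie below a fixed finite threshold. This contradicts the fact that the inequality is supposed to hold for every $x \in \R$, so $t^* = 0$ and the lemma follows. The main obstacle is identifying the right contradiction parameter $t^*$; once it is in place, the mechanism driving the proof is the strict contraction $\alpha < 1$ coming from the Brownian bridge transport between $t_1$ and $t^*$, which ensures that $\e^{\alpha x}$ grows strictly more slowly than $\e^x$, and the latter eventually dominates for $x$ large.
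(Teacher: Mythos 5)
Your proof is correct, and it takes a genuinely different and arguably cleaner route than the paper's. The paper passes to the change-of-measure formulation \eqref{MarkovianPbTilde}, stops the process at the capped hitting time $\sigma_0=\inf\{s:X^{t,x}_{t+s}\le b_0\}\wedge(t'-t)$ of the finite level $b_0:=b(t')$, and shows by direct estimates on the two resulting terms that $\tilde v(t,x)\to 0$ as $x\to\infty$, contradicting $\tilde v\ge 1$. You instead stay with the original value function $v$, invoke the martingale property of the stopped Snell envelope at the \emph{deterministic} time $t^*-t_1$, and bound $\E_{t_1,x}\bigl[v(t^*,X_{t^*})\bigr]$ via the Gaussian moment generating function of $X^{t_1,x}_{t^*}$; the contradiction then falls out of the strict contraction $\alpha=(1-t^*)/(1-t_1)<1$, so that the upper bound $2c_1\e^{\alpha x+\sigma^2/2}$ is eventually overtaken by the trivial lower bound $\e^x$. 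What this buys is a visibly shorter computation and a more transparent mechanism: the Brownian bridge drift pulls the conditional mean down by the factor $\alpha<1$, which is exactly what makes waiting until $t^*$ strictly worse than immediate stopping for $x$ large. Working with a deterministic horizon also means you only need the set inclusion $[t_1,t^*)\times\R\subseteq\cC$ and no information about the finiteness of $b$ at the endpoint $t^*$, whereas the paper's choice of $\sigma_0$ requires first locating a time $t'$ with a finite boundary value to anchor the hitting time. Two small stylistic points: the clause ``for $x>0$ large enough that $\mu\ge 0$'' is superfluous, since $\mu\ge 0$ precisely when $x\ge 0$; and you implicitly use that $b(1):=0$ by definition to cover the endpoint $t=1$, which is worth stating explicitly when you pass from $t^*=0$ to the conclusion. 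Neither affects the correctness of the argument.
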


\begin{proof}	
By contradiction, let us assume that there is $t\in(0,1)$ such that $b(t)=+\infty$. Then, thanks to Lemma \ref{lemma:nonemptystop} and Corollary \ref{cor:brc} we can find $t'\in(t,1)$ such that $0\leq b(t')=:b_0<+\infty$ and $(t,t')\times\R\subseteq\cC$. Let $\sigma_0:=\inf\{s\in [0,1-t)\,:\,X_{t+s}^{t,x}\leq b_0\}\wedge (t'-t)$, then recalling $\tau^*_{t,x}$ as in \eqref{eq:tau*}, we immediately see that $\P(\tau^*_{t,x}\ge \sigma_0)=1$. Using the martingale property of the value function (see \eqref{eq:vtilde}), we obtain 
\begin{align*}
\tilde{v}(t,x) &= \EE_{t,x}\left[\exp\left(\int_0^{\sigma_0}\left(\frac 1 2 - \frac {X_{t+s}}{1-(t+s)}\right)\mathrm{d} s\right)\tilde{v}(t+\sigma_0,X_{t+\sigma_0})\right]\\
	&= \EE_{t,x}\left[\ind_{\{\sigma_0< t'-t\}}\exp\left(\int_0^{\sigma_0}\left(\frac 1 2 - \frac{X_{t+s}}{1-(t+s)}\right)\mathrm{d} s\right)\tilde{v}(t+\sigma_0,b_0)\right] \\
	& \quad + \EE_{t,x}\left[\ind_{\{\sigma_0 = t'-t\}}\exp\left(\int_0^{t'-t}\left(\frac 1 2 - \frac{X_{t+s}}{1-(t+s)}\right)\mathrm{d} s\right)\cdot 1 \right],
	\end{align*}
where we have used continuity of paths and the fact that on $\{\sigma_0=t'-t\}$ it must be $X_{t'}\ge b(t')=b_0$, $\PP_{t,x}$-a.s.

Moreover, since $X_{t+s}^{t,x}\geq b_0$ for $s\leq \sigma_0$, we have
	\begin{align}\label{eq:vtilde0}
	\tilde{v}(t,x) 
	&	\leq \EE_{t,x}\left[\ind_{\{\sigma_0< t'-t\}}\exp\left(\int_0^{\sigma_0}\left(\frac 1 2 - \frac{b_0}{1-(t+s)}\right)\mathrm{d} s\right)\tilde{v}(t+\sigma_0,b_0)\right] \\
	&\quad+ \EE_{t,x}\left[\ind_{\{\sigma_0=t'-t \}}\exp\left(\int_0^{t'-t}\left(\frac 1 2 - \frac{X_{t+s}\vee b_0}{1-(t+s)}\right)\mathrm{d} s\right) \right]\notag\\
	&\leq \EE_{t,x}\left[\ind_{\{\sigma_0< t'-t\}}\left(\frac{1-(t+\sigma_0)}{1-t}\right)^{b_0}\e^{\sigma_0/2} \right]\cdot c_1 \notag\\
	&\quad + \EE_{t,x}\left[\ind_{\{\sigma_0= t'-t\}}\exp\left(\int_0^{t'-t}\left(\frac 1 2 - \frac{X_{t+s}\vee b_0}{1-(t+s)}\right)\mathrm{d} s\right) \right]\notag\\
	&\leq c_1\e^{1/2}\PP_{t,x}(\sigma_0< t'-t)+\EE\left[\exp\left(\int_0^{t'-t}\left(\frac 1 2 - \frac{X^{t,x}_{t+s}\vee b_0}{1-(t+s)}\right)\mathrm{d} s\right) \right],\notag
	\end{align}
	where in the second inequality we have used \eqref{eq:vb} and $\tilde{v}(t,x)=\e^{-x}v(t,x)$. Now, we let $x\to\infty$ and notice that 
\[
\PP_{t,x}(\sigma_0< t'-t)\le \PP\Big(\inf_{s\in[t,t']}X^{t,x}_s<b_0\Big)
\]
so that the first term on the right-hand side of \eqref{eq:vtilde0} goes to zero. Similarly, given that $\lim_{x\to\infty} X^{t,x}_{t+s}=+\infty$ for any $s\in[0,t'-t]$, the second term goes to zero as well by the reverse Fatou's lemma. Then, recalling that $\tilde v\ge 1$, we reach the contradiction 
	\[
	\limsup_{x\to+\infty}\tilde{v}(t,x)\leq 0.
	\]
It follows that $b(t)<+\infty$ for all $t\in(0,1]$ since by definition $b(1)=0$.
\end{proof}

\begin{lemma}\label{lemma:bfiniteat0}
	We have $b(0)<+\infty$.
\end{lemma}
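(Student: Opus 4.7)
My plan is to argue by contradiction, combining the PDE \eqref{freeb1} that $v$ satisfies on $\cC$ with the time-monotonicity of $v$ coming from Theorem \ref{DecreasingBarrier}, and the exponential growth bound \eqref{eq:vb}. Suppose $b(0)=+\infty$. Then $\{0\}\times\R\subseteq\cC$, and since $\cC$ is relatively open in $[0,1]\times\R$ with $v\in C^{1,2}(\cC)$, the PDE yields at $t=0$ the identity
\[
\partial_t v(0,x) \;=\; x\,\partial_x v(0,x)-\tfrac12\,\partial_{xx} v(0,x),\qquad x\in\R,
\]
where $\partial_t v(0,x)$ is understood as the (one-sided) right derivative. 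Theorem \ref{DecreasingBarrier} forces $\partial_t v(0,x)\le 0$, so combining this with the PDE produces the crucial inequality
\[
\partial_{xx} v(0,x)\;\ge\;2x\,\partial_x v(0,x),\qquad x\in\R.
\]

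Next I would upgrade the growth bound $\e^x\le v(0,x)\le c_1\e^x$ (for $x\ge 0$), which follows from \eqref{eq:vb} and $v\ge \e^x$, to two-sided exponential control of $\partial_x v(0,\cdot)$, using the convexity in $x$ established in Proposition \ref{xVcontinuity}. Fixing any $h>\ln c_1$ so that $c_1\e^{-h}<1$, the secant-slope inequalities for a convex function give, for all $x\ge h$,
\[
c'\,\e^x \;\le\; \frac{v(0,x)-v(0,x-h)}{h}\;\le\;\partial_x v(0,x)\;\le\;\frac{v(0,x+h)-v(0,x)}{h}\;\le\; c''\,\e^x,
\]
with $c':=(1-c_1\e^{-h})/h>0$ and $c'':=c_1\e^h/h$.

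The final step is to integrate the differential inequality over $[x,x+1]$ for $x\ge h$. On the one hand,
\[
\int_x^{x+1}\partial_{xx} v(0,y)\,\mathrm{d}y=\partial_x v(0,x+1)-\partial_x v(0,x)\;\le\; c''\,\e^{x+1};
\]
on the other, using $y\ge x$ and the lower bound on $\partial_x v$,
\[
\int_x^{x+1}2y\,\partial_x v(0,y)\,\mathrm{d}y\;\ge\; 2xc'\int_x^{x+1}\e^y\,\mathrm{d}y\;=\;2xc'\e^x(\e-1).
\]
Combining the two estimates gives $x\le c''\e/(2c'(\e-1))$, a fixed finite constant, which is incompatible with letting $x\to+\infty$. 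This contradicts the assumption $b(0)=+\infty$ and proves the claim. The main obstacle I anticipate is rigorously justifying that the PDE and the $C^{1,2}$ regularity extend up to the time-boundary $t=0$ as one-sided identities; this should follow routinely from the openness of $\cC$ in $[0,1]\times\R$ (which, under the contradiction hypothesis, provides a one-sided neighbourhood of every $(0,x)$ inside $\cC$).
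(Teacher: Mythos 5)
Your argument is correct, but it takes a genuinely different route from the paper. The paper observes that the initial time $t=0$ is not intrinsically special: introducing the analogous stopping problem for a Brownian bridge pinned at time $1+h$, the time-translation invariance of the gain $x\mapsto\e^x$ yields $v(t,x)=v^h(t+h,x)$, hence $b(0)=b^h(h)$; since $h>0$ is an \emph{interior} time of the auxiliary problem, the finiteness argument of Lemma \ref{lemma:bfiniteafter0} applies verbatim. You instead work directly at $t=0$: the PDE \eqref{freeb1} together with $\partial_t v\le 0$ (from Theorem \ref{DecreasingBarrier}) gives $\partial_{xx}v(0,x)\ge 2x\,\partial_x v(0,x)$, and the two-sided exponential bounds $\e^x\le v(0,x)\le c_1\e^x$, upgraded via convexity to bounds on $\partial_x v(0,\cdot)$, make this Gronwall-type inequality untenable for large $x$. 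Both proofs are valid; the paper's is conceptually lighter and sidesteps regularity questions at the initial time, while yours is more PDE-flavoured and self-contained, not requiring an auxiliary problem. The one delicate point, which you correctly flag, is that the $C^{1,2}$ regularity and the PDE must hold as one-sided identities at $\{0\}\times\R$ under the contradiction hypothesis $\{0\}\times\R\subseteq\cC$. This is indeed the case: after the substitution $s=1-t$ the equation is forward parabolic with $t=0$ at the top of the time interval, so parabolic interior regularity in a box $[0,t_2)\times(x_1,x_2)\subset\cC$ extends up to $\{0\}\times(x_1,x_2)$; moreover $\partial_t v(0,x)\le 0$ follows from monotonicity of $t\mapsto v(t,x)$ on $(0,1)$ together with its continuity at $t=0$ guaranteed by Proposition \ref{prop:cont-t}. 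With these two points spelled out, your proof closes correctly.
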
	

\begin{proof}
Consider an auxiliary problem where the Brownian bridge is pinned at time $1+h$, for some $h>0$, and the time horizon of the 
optimisation is $1+h$. That is, let us set
\begin{equation}
v^h(t,x):=\sup_{0\leq\tau\leq 1+h-t}\E_{t,x}\Big[\e^{\widetilde{X}_{t+\tau}}\Big],
\end{equation}
where $\widetilde{X}$ is a Brownian bridge \eqref{MarkBrownBridge} pinned at time $1+h$.
	
By the same argument as in Section \ref{sec:PF}, it follows that $\mathsf{Law}(\widetilde{X}^{t,x})=\mathsf{Law}(\widetilde{Z}^{t,x})$, where
\[
\widetilde{Z}^{t,x}_s=\frac{1+h-s}{1+h-t}x+\sqrt{\frac{1+h-s}{1+h-t}}W_{s-t}, \quad\text{for $s\in[t,1+h]$.}
\]
Thus,
\begin{equation}\label{MarkovianPb1h}
v^h(t,x)=\sup_{0\leq\tau\leq 1+h-t}\E_{t,x}\Big[\e^{\widetilde{Z}_{t+\tau}}\Big]
\end{equation}
and,	since $\mathsf{Law}(Z^{t,x}_s,\,s\in[t,1])=\mathsf{Law}(\widetilde{Z}^{t+h,x}_{s+h},\,s\in[t,1])$ (compare \eqref{MarkovianPb1} with \eqref{MarkovianPb1h}), we also have that
\begin{align}\label{eq:vvh}
v(t,x)=v^h(t+h,x), \quad \text{for all $(t,x)\in[0,1]\times\R$}.
\end{align}

By the same arguments as for the original problem, we obtain that there exists a non-decreasing, right-continuous optimal boundary $t\mapsto b^h(t)$ such that 
\[
\cC^h:=\{(t,x)\in[0,1+h]\times\R: \: v^h(t,x)>e^x\}=\{(t,x)\in[0,1+h)\times\R: \: x<b^h(t)\}.
\]
Moreover, since the gain function $\e^x$ does not depend on time, using \eqref{eq:vvh} we obtain 
\[
b(t)=b^h(t+h), \quad \text{for all $t\in[0,1]$.}
\]
In particular, $b(0)=b^h(h)$ and $b^h(h)<+\infty$ by applying the result in Lemma \ref{lemma:bfiniteafter0} to the auxiliary problem.   
\end{proof}

Using ideas as in \cite{de2015note}, we can also prove left-continuity of the optimal boundary.

\begin{lemma}\label{lemma:bleftcontinuous}
	The optimal boundary $t\mapsto b(t)$ is left-continuous.
\end{lemma}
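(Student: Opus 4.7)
The plan is to argue by contradiction. Suppose $b$ has a left jump at some $t_0 \in (0, 1]$, i.e.\ $b(t_0-) > b(t_0) =: \beta$ (the left limit exists by monotonicity). Since $\cQ \subseteq \cC$ forces $\beta \geq (1-t_0)/2$, we can pick $\beta \vee 0 < x_1 < x_2 < b(t_0-)$ with the strict inequality $x_1 > (1-t_0)/2$. Using right-continuity of $b$ (Corollary \ref{cor:brc}) and monotonicity, one finds $\delta_0 > 0$ such that the rectangle $R := [t_0-\delta_0,\, t_0) \times [x_1, x_2]$ lies entirely in $\cC$, while its top edge $\{t_0\} \times [x_1, x_2]$ lies in $\cD$. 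The strategy, in the spirit of \cite{de2015note}, is to combine the PDE \eqref{freeb1} on $R$ with the top-edge values $v(t_0, x) = \e^x$ for $x \in [x_1, x_2]$ in order to produce a quantitative inconsistency.

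Set $w(t,x) := v(t,x) - \e^x \geq 0$. A direct computation based on \eqref{freeb1} gives
\begin{equation*}
(\partial_t + \cL) w(t, x) = \e^x \Big(\tfrac{x}{1-t} - \tfrac{1}{2}\Big), \qquad (t,x) \in \cC,
\end{equation*}
and this is strictly positive on $R$ once $\delta_0$ is chosen small enough, since $x \ge x_1 > (1-t_0)/2$. Fix a non-negative $\varphi \in C_c^\infty((x_1, x_2))$ with $\varphi \not\equiv 0$. For $0 < \eps < \delta < \delta_0$, I would multiply the PDE by $\varphi(x)$, integrate over $[t_0-\delta,\,t_0-\eps] \times \R$, and integrate by parts twice in $x$ (using the compact support of $\varphi$) to obtain
\begin{equation*}
\int_\R \varphi(x)[w(t_0-\eps, x) - w(t_0-\delta, x)]\,\ud x + \int_{t_0-\delta}^{t_0-\eps}\!\!\int_\R w \Big[\tfrac{1}{2}\varphi''(x) + \tfrac{\varphi(x)+x\varphi'(x)}{1-t}\Big]\ud x\,\ud t = \int_{t_0-\delta}^{t_0-\eps}\!\!\int_\R \varphi(x)\,\e^x \Big[\tfrac{x}{1-t} - \tfrac{1}{2}\Big]\ud x\,\ud t.
\end{equation*}

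Then let $\eps \to 0$ and split into two subcases. If $t_0 < 1$, the continuity and local Lipschitz-in-time estimates of Propositions \ref{xVcontinuity} and \ref{prop:cont-t} give $w(t_0-\eps, \cdot) \to 0$ on the support of $\varphi$ and $0 \leq w(t,x) \leq L(t_0-t)$ uniformly in $x \in [x_1, x_2]$. Hence the left-hand side is at most $O(\delta^2)$ (the first integral is $\leq 0$ in the limit, the second is $O(\delta^2)$ by Fubini), whereas the right-hand side is bounded below by a strictly positive multiple of $\delta$; the identity then fails for $\delta$ small. If $t_0 = 1$, I take $x_1 > 0 = \beta$ so that $v$ is still continuous at $\{1\} \times [x_1, x_2]$ by \eqref{eq:discont1}, and combine $v(t,x) \leq \e^x\,\E[\e^{S_{1-t}}]$ (from the proof of Proposition \ref{prop:cont-t}) with the Brownian scaling identity $S_{1-t} \stackrel{d}{=} \sqrt{1-t}\,S_1$ and the inequality $\e^u - 1 \leq u\e^u$ to conclude $w(t,x) \leq c_2\, \e^x \sqrt{1-t}$ on $R$. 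The resulting $1/\sqrt{1-t}$ singularity is integrable on $[1-\delta, 1)$, so the left-hand side stays bounded as $\eps \to 0$, whereas the right-hand side diverges like $\log(\delta/\eps) \to +\infty$, since $\int_{1-\delta}^{1-\eps}\ud t/(1-t) \to +\infty$; again a contradiction.

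The main obstacle is the endpoint case $t_0 = 1$: there the Lipschitz-in-time bound of Proposition \ref{prop:cont-t} degenerates and the drift coefficient $x/(1-t)$ of $\cL$ is singular. The argument survives only because of the asymmetry between the $O(\sqrt{1-t})$ decay available for $w$ via Brownian scaling of $S_{1-t}$ and the non-integrable $1/(1-t)$ singularity appearing in the source term of the PDE for $w$.
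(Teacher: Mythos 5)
Your proof is correct and reaches the same conclusion as the paper's (a contradiction obtained from testing the PDE against a smooth bump function $\varphi$, in the spirit of \cite{de2015note}), but the two arguments run the contradiction in genuinely different ways.

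The paper works \emph{pointwise in $t$}: it starts from $0 = \int (\partial_t v + \cL v)\varphi\,\ud y$ for fixed $t<t_0$, drops the time-derivative term using the sign $\partial_t v\le 0$ (a byproduct of the proof of Theorem~\ref{DecreasingBarrier}), integrates by parts to get $0\le \int v\,\cL^*\varphi\,\ud y$, and then simply lets $t\uparrow t_0$ using continuity of $v$. The limit becomes $\int \e^y\big(\tfrac12-\tfrac{y}{1-t_0}\big)\varphi(y)\,\ud y\ge 0$, which is impossible because the integrand is strictly negative on $[x_1,x_2]$. For $t_0=1$ the paper rescales the test function to $\tilde\varphi(t,x)=(1-t)\varphi(x)$, which neutralises the $1/(1-t)$ singularity in $\cL^*$ and allows the pointwise limit to go through unchanged. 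Your version instead subtracts the obstacle, works with $w=v-\e^x\ge 0$, and \emph{integrates over a time slab} $[t_0-\delta,t_0-\eps]$; this requires nothing on the sign of $\partial_t v$ (your first boundary term is $\le 0$ simply because $w\ge 0$), but in exchange you need quantitative decay of $w$ near the top edge: the Lipschitz-in-time bound from Proposition~\ref{prop:cont-t} when $t_0<1$, and the Brownian-scaling estimate $w(t,x)\le c_2\,\e^x\sqrt{1-t}$ for $x>0$ when $t_0=1$. The contradiction you produce is quantitative ($O(\delta^2)$ vs.\ $\gtrsim\delta$ for $t_0<1$; bounded vs.\ $\log(\delta/\eps)\to\infty$ for $t_0=1$), whereas the paper's is a clean sign contradiction in a single limiting integral. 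Your route is slightly longer but self-contained in a useful way: it shows explicitly how the $\sqrt{1-t}$ regularity of $w$ near the pinning time beats the non-integrable $1/(1-t)$ source term, a point the paper's rescaling trick elegantly bypasses but does not make visible.

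One small remark: when constructing the rectangle $R$ you invoke right-continuity of $b$, but monotonicity alone already gives $b(t)\ge b(t_0-)>x_2$ for all $t<t_0$, so $[t_0-\delta_0,t_0)\times[x_1,x_2]\subset\cC$ for any $\delta_0$; right-continuity is not needed at that step.
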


\begin{proof}
We first prove that the boundary is left-continuous for all $t\in(0,1)$ and then that its left limit at $t=1$ is zero, that is $b(1-)=0=b(1)$.
	
	Suppose, by contradiction, that there exists $t_0\in(0,1)$ such that $b(t_0-)>b(t_0)$ and consider an interval $[x_1,x_2]\subset (b(t_0),b(t_0-))$. By monotonicity of $b$, we have that $[0,t_0)\times[x_1,x_2]\subset \cC$. Now, pick an arbitrary, non-negative $\varphi\in\cC_c^{\infty}([x_1,x_2])$. Since \eqref{freeb1} holds in $[0,t_0)\times[x_1,x_2]$, then for any $t<t_0$
	we have
	\begin{align}\label{eq:varphi}
	0 &= \int_{x_1}^{x_2} [\partial_t v(t,y)+\cL v(t,y)]\varphi(y)\mathrm{d} y \\
	&\leq \int_{x_1}^{x_2}\cL v(t,y) \varphi(y)\mathrm{d} y = \int_{x_1}^{x_2} v(t,y) (\cL^* \varphi)(t,y) \mathrm{d} y, \nonumber
	\end{align}
where for the inequality we have used $\partial_{t}v\le 0$ (see proof of Proposition \ref{DecreasingBarrier}) and in the final equality we have applied integration by parts and used the adjoint operator
	\[
	(\cL^* \varphi)(t,y):=\frac 1 2 \varphi''(y)+\frac{1}{1-t}\cdot\frac{\ud}{\ud y}(y\cdot \varphi(y)).
	\]
	
	Taking limits as $t\to t_0$ and using dominated convergence theorem, we obtain
	\begin{align}\label{LeftContContrad}
	0 & \leq \lim_{t\uparrow t_0} \int_{x_1}^{x_2} v(t,y) (\cL^* \varphi)(t,y) \mathrm{d} y=\int_{x_1}^{x_2} v(t_0,y)(\cL^* \varphi)(t_0,y)\mathrm{d} y  \\
	&= \int_{x_1}^{x_2} \e^y (\cL^* \varphi)(t_0,y) \mathrm{d} y  = \int_{x_1}^{x_2} \e^y \left(\frac{1}{2}-\frac{y}{1-t_0} \right) \varphi(y) \mathrm{d} y,\nonumber 
	\end{align}
	where we have used that $v(t_0,y)=\e^y$ and integration by parts in the final equality. 
	
	Finally, recalling that $x_1\geq b(t_0)>\frac{1-t_0}{2}$, then \eqref{LeftContContrad} leads to a contradiction because the right-hand side of the expression is strictly negative (also $\varphi$ is arbitrary).
	
	In order to prove that $b(1-)=b(1)=0$, we need a slight modification of the argument above. In particular, suppose by contradiction that $b(1-)>0$ and consider an interval $[x_1,x_2]\subset (0,b(1-))$. Then, replacing $\varphi$ in \eqref{eq:varphi} with $\tilde{\varphi}(t,x):=(1-t)\varphi(x)$, and using the same arguments with $t_0=1$ we reach a contradiction, i.e.
	\[
	0\leq \lim_{t\uparrow 1}\int_{x_1}^{x_2} v(t,y)(\cL \tilde{\varphi})(t,y)\mathrm{d} y=\int_{x_1}^{x_2} \e^y\frac{\mathrm{d}}{\mathrm{d}y}(y\cdot \varphi(y))\mathrm{d}y=-\int_{x_1}^{x_2} \e^y y \varphi(y)\mathrm{d}y<0.
	\]
\end{proof}

We are now able to prove Proposition \ref{prop:bfinite}.

\begin{proof}[Proof of Proposition \ref{prop:bfinite}]
The proof of \eqref{bBounded} follows immediately from Lemma \ref{lemma:bfiniteafter0} and Lemma \ref{lemma:bfiniteat0}. Right-continuity of the boundary follows from Corollary \ref{cor:brc} and \eqref{bBounded}, whereas left-continuity follows from Lemma \ref{lemma:bleftcontinuous}. Thus, the optimal boundary is bounded and continuous on $[0,1]$.
\end{proof}

\section{Regularity of the value function and integral equations}\label{sec:C1}

Thanks to monotonicity of the optimal boundary and to the law of iterated logarithm (combined with \eqref{MarkBrownBridge1}), it is easy to see that for any $(t,x)\in[0,1)\times\R$ it holds $\tau^*_{t,x}=\tau'_{t,x}$, $\P$-a.s., where 
\begin{align}\label{eq:tau2}
\tau'_{t,x}:=\inf\{s\in [0,1-t]: X^{t,x}_{t+s}>b(t+s)\},\quad\text{$\P$-a.s.}
\end{align}
That is, the first time the process reaches the optimal boundary it also goes strictly above it.
(A proof of this claim can be found, e.g., in Lemma 5.1 of \cite{de2017dividend}).

Moreover, combining \eqref{eq:tau2} with continuity of the optimal boundary, we deduce
\[
\tau^*_{t,x}=\inf\{s\in[0,1-t]:(t+s,X^{t,x}_{t+s})\in\text{int}(\cD)\},
\]
where $\text{int}(\cD)=\cD\setminus\partial\cC$ is the interior of the stopping set. In particular, since $\tau_{t,x}^*=0$, $\P$-a.s.~for any $(t,x)\in\partial\cC$, by its definition \eqref{Tau*}, this implies $\tau'_{t,x}=0$, $\P$-a.s.~as well for $(t,x)\in\partial\cC$. This means that the boundary $\partial\cC$ is regular for the interior of the stopping set in the sense of diffusions (see, e.g., \cite{de2018global}). 

It is therefore possible to prove (see, e.g., Corollary 6 in \cite{de2018global} and Proposition 5.2 in \cite{de2017dividend}) that for any $(t_0,x_0)\in\partial\cC$ (i.e., $x_0 =b(t_0)$) and any sequence $(t_n,x_n)_{n\ge 1}\subseteq\cC$ converging to $(t_0,x_0)$ as $n\to\infty$, we have
\begin{align}\label{eq:convtau}
\lim_{n\to\infty}\tau^*_{t_n,x_n}=\lim_{n\to\infty}\tau'_{t_n,x_n}=0,\quad\text{$\P$-a.s.}
\end{align}
Now we can use this property of the optimal stopping time and some related ideas from \cite{de2018global} to establish $C^1$ regularity of the value function.

First, we give a lemma concerning the spatial derivative of $v$.

\begin{lemma}\label{lem:vx}
	For all $(t,x)\in([0,1]\times\R)\setminus\partial\cC$ we have
	\begin{equation}\label{eq:vx}
	\partial_x v(t,x)=\E_{t,x}\Big[\frac{1-t-\tau^*}{1-t} \e^{Z_{t+\tau^*}}\Big].
	\end{equation}
	Hence, it also holds 
	\begin{align}\label{eq:vvx}
	\partial_x v(t,x)\le v(t,x),\qquad\text{for $(t,x)\in([0,1]\times\R)\setminus\partial\cC$}.
	\end{align}
\end{lemma}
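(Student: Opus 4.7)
The plan is to decompose the set $([0,1]\times\R)\setminus\partial\cC$ into the continuation region $\cC$ and the interior $\mathrm{int}(\cD)$ and handle these two pieces separately. On $\mathrm{int}(\cD)$ we have $v(t,x)=\e^x$, while $\tau^*_{t,x}=0$ almost surely, so both sides of \eqref{eq:vx} reduce to $\e^x$ and the identity is immediate. Hence all the work lies in verifying \eqref{eq:vx} on $\cC$.

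For $(t,x)\in\cC$ I would switch to the $Z$-formulation \eqref{MarkovianPb1}, which has the decisive feature that $x\mapsto Z^{t,x}_s$ is affine with deterministic slope $(1-s)/(1-t)$. Consequently, for any admissible $\tau$,
\[
\e^{Z^{t,x+\eps}_{t+\tau}}-\e^{Z^{t,x}_{t+\tau}} = \e^{Z^{t,x}_{t+\tau}}\!\left(\exp\!\Big(\tfrac{(1-t-\tau)\eps}{1-t}\Big)-1\right),
\]
with an entirely explicit dependence on $\eps$. Using $\tau^*:=\tau^*_{t,x}$ as a sub-optimal rule for the perturbed problems at $(t,x\pm\eps)$ yields the one-sided comparisons
\[
v(t,x+\eps)-v(t,x)\ge \E\!\left[\e^{Z^{t,x}_{t+\tau^*}}\!\big(\e^{(1-t-\tau^*)\eps/(1-t)}-1\big)\right],
\]
\[
v(t,x)-v(t,x-\eps)\le \E\!\left[\e^{Z^{t,x}_{t+\tau^*}}\!\big(1-\e^{-(1-t-\tau^*)\eps/(1-t)}\big)\right].
\]

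Dividing by $\eps>0$ and sending $\eps\downarrow 0$, each integrand converges pointwise to $\tfrac{1-t-\tau^*}{1-t}\e^{Z^{t,x}_{t+\tau^*}}$. For $\eps\in(0,1]$, the elementary bound $|\e^{a\eps}-1|\le \mathrm{e}\,a\,\eps$ with $a=(1-t-\tau^*)/(1-t)\in[0,1]$ shows that both difference quotients are dominated by $\mathrm{e}\cdot\sup_{s\in[t,1]}\e^{Z^{t,x}_s}$, which is integrable by \eqref{eq:ui}. Dominated convergence therefore gives
\[
\partial_x^+ v(t,x)\;\ge\;\E\!\left[\frac{1-t-\tau^*}{1-t}\,\e^{Z^{t,x}_{t+\tau^*}}\right]\;\ge\;\partial_x^- v(t,x),
\]
while convexity of $v(t,\cdot)$ from Proposition \ref{xVcontinuity} forces the opposite inequality $\partial_x^-v(t,x)\le \partial_x^+v(t,x)$. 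The sandwich closes, both one-sided derivatives coincide, and \eqref{eq:vx} follows.

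The inequality \eqref{eq:vvx} is then an immediate corollary: combining \eqref{eq:vx} with the identity $v(t,x)=\E[\e^{Z^{t,x}_{t+\tau^*}}]$ and the bound $(1-t-\tau^*)/(1-t)\in[0,1]$ gives $\partial_xv(t,x)\le v(t,x)$. The only truly delicate point in the argument is securing the uniform integrable dominator needed to exchange the limit with the expectation; this step is made routine by the explicit linearity of $x\mapsto Z^{t,x}_s$, which is precisely why passing to the $Z$-formulation pays off here rather than working directly with the SDE for $X$. No continuity of $\tau^*_{t,x+\eps}$ in $\eps$ is needed, because the sub-optimality construction evaluates the payoff perturbation at a \emph{single} stopping time $\tau^*_{t,x}$.
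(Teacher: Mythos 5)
Your overall strategy mirrors the paper's: treat $\mathrm{int}(\cD)$ trivially, pass to the $Z$-formulation on $\cC$, compare against the single stopping time $\tau^*_{t,x}$, and use dominated convergence on the one-sided difference quotients. The domination bound you give is fine, and the $\mathrm{int}(\cD)$ reduction is correct. But the last step contains a genuine logical gap.

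You derive (correctly, taking $\eps\downarrow 0$ in the two one-sided comparisons)
\[
\partial_x^+ v(t,x)\;\ge\;\E\Big[\tfrac{1-t-\tau^*}{1-t}\,\e^{Z^{t,x}_{t+\tau^*}}\Big]\;\ge\;\partial_x^- v(t,x),
\]
and then claim that convexity ``forces the opposite inequality $\partial_x^- v\le \partial_x^+ v$'' and closes the sandwich. This is not the opposite inequality: for a convex function one always has $\partial_x^- \le \partial_x^+$, which is \emph{the same direction} as the sandwich you already have. Together they only give $\partial_x^- v \le A \le \partial_x^+ v$, which is perfectly compatible with a kink (think $f(x)=|x|$ at $0$, with $\partial_x^- f(0)=-1\le 0\le 1=\partial_x^+ f(0)$ while $f$ is not differentiable at $0$). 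Convexity alone cannot upgrade this to equality.

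What actually closes the argument is the $C^{1,2}$ regularity of $v$ on the open set $\cC$, established from the free-boundary PDE \eqref{freeb1}; this is exactly what the paper invokes (``recalling that $v$ is differentiable at $(t,x)\in\cC$''). Once you know $\partial_x^- v(t,x)=\partial_x^+ v(t,x)=\partial_x v(t,x)$ for $(t,x)\in\cC$, the two-sided bound pins $\partial_x v(t,x)$ to the expectation and gives \eqref{eq:vx}. Replace your appeal to convexity by this known interior regularity and the proof is complete; the derivation of \eqref{eq:vvx} from \eqref{eq:vx} together with $v(t,x)=\E[\e^{Z^{t,x}_{t+\tau^*}}]$ and $(1-t-\tau^*)/(1-t)\in[0,1]$ is then correct.
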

\begin{proof}
	Recall that $v\in C^{1,2}(\cC)$ (see comment before \eqref{freeb1}). Moreover, $v(t,x)=\e^x$ on $\cD$ and $\partial_x v(t,x)=\e^x$ on $\cD\setminus\partial\cC$ 
	as needed in \eqref{eq:vx}. It remains to show that \eqref{eq:vx} holds for all $(t,x)\in \cC$. 
	
	Fix $(t,x)\in \cC$ and take $\varepsilon>0$. Recall problem formulation \eqref{MarkovianPb1} with the explicit expression for $Z$ (see \eqref{MarkBrownBridge1}) and recall that we use the notation $\tau^*:=\tau^*_{t,x}$ (as in \eqref{Tau*}). Since $\tau^*$ is admissible but sub-optimal for the problem with value $v(t,x+\eps)$, we have
	\begin{align*}
	v(t,x)-v(t,x+\eps) &\leq \E\left[\exp(Z^{t,x}_{t+\tau^*})-\exp(Z^{t,x+\varepsilon}_{t+\tau^*})\right] \\
	&= \E\left[\left(1-\exp\left(\frac{1-t-\tau^*}{1-t}\eps\right)\right)\exp(Z^{t,x}_{t+\tau^*})\right].
	\end{align*}
	Hence, by dominated convergence theorem and recalling that $v$ is differentiable at $(t,x)\in\cC$, we obtain 
	\begin{align}\label{eq:vx1}
	\partial_x v(t,x)=\lim_{\eps\to 0} \frac{v(t,x+\varepsilon)-v(t,x)}{\varepsilon}\geq \E\left[\frac{1-t-\tau^*}{1-t}\exp(Z^{t,x}_{t+\tau^*})\right].
	\end{align}
	By the same arguments, we also have that
	\[
	v(t,x)-v(t,x-\varepsilon)\leq \E\left[\left(1-\exp\left(-\frac{1-t-\tau^*}{1-t}\eps\right)\right)\exp(Z^{t,x}_{t+\tau^*})\right]
	\]
	which implies
	\begin{align}\label{eq:vx2}
	\partial_x v(t,x)=\lim_{\eps\to 0} \frac{v(t,x)-v(t,x-\varepsilon)}{\varepsilon}\leq \E\left[\frac{1-t-\tau^*}{1-t}\exp(Z^{t,x}_{t+\tau^*})\right].
	\end{align}
	Combining \eqref{eq:vx1} and \eqref{eq:vx2} we obtain \eqref{eq:vx}.
	
	Now, the inequality in \eqref{eq:vvx} follows easily by comparison of \eqref{eq:vx} and \eqref{MarkovianPb1}.
\end{proof}

\begin{theorem}\label{thm:C1}
We have $v\in C^1([0,1)\times\R)$. 
\end{theorem}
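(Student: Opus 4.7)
The overall strategy is to reduce the $C^1$ claim to checking that the first-order partial derivatives of $v$ extend continuously across $\partial\cC=\{(t,b(t)):t\in[0,1)\}$. Indeed, $v\in C^{1,2}(\cC)$ (shown before \eqref{freeb1}) and $v(t,x)=\e^x$ on $\cD$ forces $v\in C^\infty$ on $\text{int}(\cD)$; approaching $(t_0,b(t_0))\in\partial\cC$ from the $\cD$-side gives $\partial_xv=\e^{b(t_0)}$ and $\partial_tv=0$, so the remaining task is to produce matching limits from inside $\cC$ along an arbitrary sequence $(t_n,x_n)\subseteq\cC$ with $(t_n,x_n)\to(t_0,b(t_0))$. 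The key probabilistic input is the boundary regularity \eqref{eq:convtau}, i.e.~$\tau^*_{t_n,x_n}\to 0$ $\P$-a.s., which powers all limit-passage arguments through dominated convergence.

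For $\partial_xv$ (smooth fit) I would apply the representation \eqref{eq:vx} directly. With $\tau^*_n\to 0$ a.s., the explicit form \eqref{MarkBrownBridge1} and pathwise continuity of Brownian motion give $Z^{t_n,x_n}_{t_n+\tau^*_n}\to b(t_0)$ a.s., while the integrand in \eqref{eq:vx} admits, on a compact set containing $\{(t_n,x_n)\}$, the uniform majorant $\e^{|x_n|+S_1}\in L^1(\P)$ by \eqref{eq:ui}. Dominated convergence then yields $\partial_xv(t_n,x_n)\to\e^{b(t_0)}$, matching the $\cD$-side limit. Convexity of $x\mapsto v(t,x)$ from Proposition \ref{xVcontinuity} promotes this to existence of $\partial_xv$ at every boundary point and to continuity of $\partial_xv$ on $[0,1)\times\R$.

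For $\partial_tv$ I would derive an analogous probabilistic representation from formulation \eqref{MarkovianPb2}. Setting $h(t,x,\theta):=\exp((1-\theta)x+\sqrt{(1-\theta)(1-t)}W_\theta)$, direct differentiation gives $\partial_th=-\tfrac{\sqrt{1-\theta}}{2\sqrt{1-t}}W_\theta\,h$, and mimicking the two-sided sub-optimality argument of Lemma \ref{lem:vx} with the optimiser $\theta^*_{t,x}$ of \eqref{MarkovianPb2} in place of $\tau^*_{t,x}$, the target identity is
\[
\partial_tv(t,x)=-\frac{1}{2\sqrt{1-t}}\,\E\Big[\sqrt{1-\theta^*_{t,x}}\,W_{\theta^*_{t,x}}\,h(t,x,\theta^*_{t,x})\Big],\qquad(t,x)\in\cC.
\]
Since $\tau^*_{t_n,x_n}=\theta^*_{t_n,x_n}(1-t_n)$ with $1-t_n\to 1-t_0>0$, the relation \eqref{eq:convtau} forces $\theta^*_n\to 0$ a.s., hence $W_{\theta^*_n}\to 0$ and $h(t_n,x_n,\theta^*_n)\to \e^{b(t_0)}$ a.s. Uniform domination on compacts of $[0,T]\times\R$ ($T<1$) by a constant multiple of $S_1\e^{S_1}\in L^1(\P)$ (see \eqref{eq:c1}) then gives $\partial_tv(t_n,x_n)\to 0$, matching the value from $\text{int}(\cD)$; existence of $\partial_tv$ at boundary points is then obtained by inspecting the one-sided difference quotients along $h\to 0^\pm$ and using the Lipschitz estimate \eqref{eq:lipt} together with $b$ being continuous and non-increasing.

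The hard step will be the derivation of the representation for $\partial_tv$. The subtlety is that in \eqref{MarkovianPb2} the filtration $\hat\cF_s=\cF_{s(1-t)}$ depends on $t$, so $\theta^*_{t,x}$ is not literally admissible for the problem at $t+\eps$ and the one-sided sub-optimality argument requires a careful reparametrisation together with \eqref{eq:lipt}. A possibly cleaner alternative is to work from \eqref{MarkovianPbTilde} under $\PP$: the transformed dynamics \eqref{MarkovXTilde} depend smoothly on $t$ while the underlying Brownian filtration is unchanged, and the resulting representation for $\partial_t\tilde v$ transfers to $\partial_tv=\e^x\partial_t\tilde v$. Once the representation is in place, the continuity statements above follow from standard dominated convergence.
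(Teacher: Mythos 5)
Your proposal is correct and follows essentially the same route as the paper: reduce to continuity of the first derivatives across $\partial\cC$, match the limits from the $\cD$-side and the $\cC$-side, and pass to the limit via the boundary regularity \eqref{eq:convtau} and dominated convergence. Step~1 (smooth fit via the representation \eqref{eq:vx}) is identical to the paper's argument. For Step~2, two remarks are in order. First, your concern that the filtration $\hat\cF_s=\cF_{s(1-t)}$ depends on $t$, so that $\theta^*_{t,x}$ might not be admissible for $v(t+\eps,x)$, is unfounded: after the scaling \eqref{eq:laws2} that produces formulation \eqref{MarkovianPb2}, the variable $\theta$ ranges over ordinary $(\cF_s)_{s\in[0,1]}$-stopping times with values in $[0,1]$, a class independent of $t$ -- this is precisely why that formulation is introduced (see the sentence following \eqref{MarkovianPb2}), and it is already exploited in the proof of Proposition \ref{prop:cont-t}. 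Consequently your two-sided sub-optimality argument goes through as written and the proposed detour via $\PP$ and \eqref{MarkovianPbTilde} is unnecessary. Second, the paper does not in fact derive your exact representation for $\partial_t v$ in $\cC$. Since $\partial_t v\le 0$ on $\cC$ is already available from Theorem~\ref{DecreasingBarrier}, it suffices to obtain the one-sided estimate \eqref{TimeDerivativeFormula} from sub-optimality of $\theta^*_{t,x}$ for $v(t+\eps,x)$ and then squeeze using \eqref{eq:convtau}. Your exact formula is correct and would serve the same purpose, but it is more than the argument requires.
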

\begin{proof}
We know from \eqref{freeb1} that $\partial_x v$ and $\partial_t v$ exist and are continuous in $\cC$. Moreover, $v(t,x)=\e^x$ on $\cD$ implies $\partial_x v(t,x)=\e^x$ and $\partial_t v(t,x)=0$ for $(t,x)\in\cD\setminus\partial\cC$. Then, it remains to prove that $\partial_x v$ and $\partial_t v$ are continuous across the boundary $\partial\cC$. We do this in two steps below.
\vspace{+4pt}

\emph{Step 1}. (Continuity of $\partial_x v$). Fix $(t_0,x_0)\in\partial\cC$ with $t_0<1$ and recall \eqref{eq:vx}. Then, for any sequence  $(t_n,x_n)_{n\ge 1}\subseteq\cC$ converging to $(t_0,x_0)$ as $n\to\infty$, we can use dominated convergence theorem, continuity of paths and \eqref{eq:convtau} to obtain
\[
\lim_{n\to\infty}\partial_x v(t_n,x_n)=\E\left[\lim_{n\to\infty}\frac{1-t_n-\tau^*_{t_n,x_n}}{1-t_n}\exp(Z^{t_n,x_n}_{t_n+\tau^*_{t_n,x_n}})\right]= \e^x.
\]
\vspace{+4pt}

\emph{Step 2}. (Continuity of $\partial_t v$).	Let $(t,x)\in\cC$ and $0<\varepsilon<1-t$. Then, repeating arguments as those used in \eqref{eq:lip1} and recalling that $t\mapsto v(t,x)$ is non-increasing on $[0,1)$ (see proof of Proposition \ref{DecreasingBarrier}) we obtain 
\begin{align*}
0 &\geq v(t+\varepsilon,x)-v(t,x) \\
	&\geq \E\left[\e^{(1-\theta^*)x}\left(\e^{\sqrt{(1-\theta^*)(1-t-\varepsilon)}W_{\theta^*}}-\e^{\sqrt{(1-\theta^*)(1-t)}W_{\theta^*}}\right)\right]\\
	& \geq -\left(\sqrt{1-t}-\sqrt{1-t-\varepsilon}\right)\e^{|x|}\E\left[|W_{\theta^*}|\e^{|W_{\theta^*}|}\right],
	\end{align*}
where $\theta^*:=\theta^*_{t,x}$ is the optimal stopping time for $v(t,x)$ (see \eqref{MarkovianPb2}).
	
Dividing all terms above by $\eps$ and letting $\eps\to 0$, we find
\begin{align}\label{TimeDerivativeFormula}
0 \geq \partial_t v(t,x) \geq -\frac{1}{2\sqrt{1-t}}\e^{|x|}\E\left[|W_{\theta^*}|\e^{|W_{\theta^*}|}\right].
\end{align}
The inequalities in \eqref{TimeDerivativeFormula} hold if we replace $(t,x)$ by $(t_n,x_n)$ and $\theta^*$ by $\theta^*_n:=\theta^*_{t_n,x_n}$, where the sequence $(t_n,x_n)_{n\ge 1}\subseteq\cC$ converges to $(t_0,x_0)\in\partial\cC$ as $n\to\infty$.

Now we aim at letting $n\to\infty$.	Notice that \eqref{eq:convtau} and the definition of $\theta$ in \eqref{MarkovianPb2} imply 
\[
\lim_{n\to\infty}\theta^*_{t_n,x_n}=\lim_{n\to\infty}\frac{\tau^*_{t_n,x_n}}{1-t_n}=0, \quad \text{$\P$-a.s. }
\]
Thus, using dominated convergence theorem, we obtain 
\[
0\ge \lim_{n\to\infty} \partial_t v(t_n,x_n)\ge -\frac{1}{2\sqrt{1-t_0}}\e^{|x_0|}\E\left[\lim_{n\to\infty}|W_{\theta^*_n}|\e^{|W_{\theta^*_n}|}\right]=0.
\]  
\end{proof}
Theorem \ref{thm:C1} has a simple corollary which shows the regularity of $\partial_{xx} v$ across the boundary. In particular, $\partial_{xx}v$ is continuous but for a (possible) jump along the optimal boundary. 
\begin{corollary}\label{cor:C2}
The second derivative $\partial_{xx}v$ is continuous on $([0,1]\times\R)\setminus\partial\cC$. Moreover, for any $(t_0,x_0)\in\partial\cC$ with $t_0<1$ and any sequence  $(t_n,x_n)_{n\ge 1}\subseteq\cC$ converging to $(t_0,x_0)$ as $n\to\infty$, we have
\begin{align}\label{eq:vxx}
\lim_{n\to\infty}\partial_{xx}v(t_n,x_n)=\frac{2x_0}{1-t_0}\e^{x_0}\geq\e^{x_0}.
\end{align}
\end{corollary}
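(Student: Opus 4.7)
The plan is to deduce both assertions directly from the PDE \eqref{freeb1} and Theorem \ref{thm:C1}. For the first claim, I would split the open set $([0,1]\times\R)\setminus\partial\cC$ into its two disjoint open pieces, namely $\cC$ and the interior of $\cD$. On $\cC$ we already know $v\in C^{1,2}$, so $\partial_{xx}v$ is continuous there. On $\mathrm{int}(\cD)$ we have $v(t,x)=\e^x$ identically, hence $\partial_{xx}v(t,x)=\e^x$, which is clearly continuous. Since a function is continuous on a disjoint union of open sets whenever it is continuous on each of them separately, the first claim follows.

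For the limit \eqref{eq:vxx}, I would solve the PDE \eqref{freeb1} for $\partial_{xx}v$, obtaining
\[
\partial_{xx}v(t,x)=\frac{2x}{1-t}\partial_x v(t,x)-2\partial_t v(t,x),\qquad (t,x)\in\cC.
\]
Now fix $(t_0,x_0)\in\partial\cC$ with $t_0<1$ and a sequence $(t_n,x_n)_{n\ge1}\subseteq\cC$ with $(t_n,x_n)\to(t_0,x_0)$. By Theorem \ref{thm:C1}, both $\partial_x v$ and $\partial_t v$ extend continuously up to $\partial\cC$, with $\partial_x v(t_0,x_0)=\e^{x_0}$ (established in Step 1 of its proof) and $\partial_t v(t_0,x_0)=0$ (established in Step 2). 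Passing to the limit in the identity above therefore gives
\[
\lim_{n\to\infty}\partial_{xx}v(t_n,x_n)=\frac{2x_0}{1-t_0}\e^{x_0}-0=\frac{2x_0}{1-t_0}\e^{x_0}.
\]

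Finally, the inequality $\frac{2x_0}{1-t_0}\e^{x_0}\ge \e^{x_0}$ is equivalent to $x_0\ge\frac{1}{2}(1-t_0)$, i.e.\ $b(t_0)\ge \frac{1}{2}(1-t_0)$. This follows immediately from the inclusion $\cQ\subseteq\cC$ recorded in \eqref{eq:Q}: since $\cQ$ is an open subset of $\cC$, its points cannot lie on $\partial\cC$, so any $(t_0,x_0)\in\partial\cC$ must satisfy $x_0\ge\frac{1}{2}(1-t_0)$.

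There is essentially no hard step here; the only thing to be careful about is that the PDE argument only yields the one-sided limit from inside $\cC$, which is precisely what the statement asks for (a genuine jump of $\partial_{xx}v$ across $\partial\cC$ is expected, since from inside $\mathrm{int}(\cD)$ the limit is $\e^{x_0}$, strictly smaller than $\frac{2x_0}{1-t_0}\e^{x_0}$ unless $b(t_0)=\frac{1}{2}(1-t_0)$).
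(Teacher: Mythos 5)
Your proof is correct and follows essentially the same route as the paper: continuity away from $\partial\cC$ by the explicit formula on $\mathrm{int}(\cD)$ and the $C^{1,2}$ regularity on $\cC$, the limit \eqref{eq:vxx} by solving the PDE \eqref{freeb1} for $\partial_{xx}v$ and passing to the limit via Theorem \ref{thm:C1}, and the inequality from $\cQ\subseteq\cC$. You simply spell out a few details (the boundary values $\partial_x v=\e^{x_0}$, $\partial_t v=0$ and the reformulation $b(t_0)\ge(1-t_0)/2$) that the paper leaves implicit.
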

\begin{proof}
Since $v(t,x)=\e^x$ in $\cD$, then $\partial_{xx}v(t,x)=e^x$ in $\cD\setminus\partial\cC$ which is continuous. Moreover, $\partial_{xx}v \in C\big(\cC\big)$ and so $\partial_{xx}v$ is continuous on $([0,1]\times\R)\setminus\partial\cC$.

To show \eqref{eq:vxx}, it is sufficient to take limits in \eqref{freeb1}, that is
\begin{align}
\lim_{n\to\infty}\partial_{xx}v(t_n,x_n)=\lim_{n\to\infty}2\left(-\partial_tv(t_n,x_n)+\frac{x_n}{1-t_n}\partial_xv(t_n,x_n)\right)=\frac{2x_0}{1-t_0}\e^{x_0},
\end{align}
where we used Theorem \ref{thm:C1} to arrive at the final expression. The inequality in \eqref{eq:vxx} follows from the fact that $\cQ\subseteq\cC$ (see \eqref{eq:Q}).
\end{proof}

\subsection{Integral equation for the optimal boundary}\label{sec:II}

The regularity of the value function proved in the previous section allows us to derive an integral equation for the optimal boundary. This follows well-known steps (see, e.g., \cite{peskir2006optimal}) which we repeat briefly below.
\begin{theorem}
	For all $(t,x)\in[0,1)\times\R$, the value function has the following representation
	\begin{equation}\label{ValueFunctionRepr}
	v(t,x)=1+\E_{t,x}\left[\int_0^{1-t}\e^{X_{t+s}}\Big(\frac{X_{t+s}}{1-t-s}-\frac 1 2 \Big)\ind_{\{X_{t+s}>b(t+s)\}}\mathrm{d} s\right].
	\end{equation}
	Moreover, the optimal boundary $t\mapsto b(t)$ is the unique continuous solution of the following nonlinear integral equation, for all $t\in[0,1]$
	\begin{equation}\label{IntegralEq}
	\e^{b(t)}=1+\E_{t,b(t)}\left[\int_0^{1-t}\e^{X_{t+s}}\Big(\frac{X_{t+s}}{1-t-s}-\frac 1 2 \Big)\ind_{\{X_{t+s}>b(t+s)\}}\mathrm{d} s\right],
	\end{equation}
	with $b(1)=0$ and $b(t)\ge (1-t)/2$.
\end{theorem}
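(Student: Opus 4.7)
The plan is to follow the standard Peskir methodology (as in \cite{peskir2006optimal}) for deriving and characterising the boundary via an integral equation. The key inputs are the regularity $v\in C^1([0,1)\times\R)$ (Theorem \ref{thm:C1}) together with the one-sided continuity of $\partial_{xx}v$ up to $\partial\cC$ from Corollary \ref{cor:C2}, which is precisely the amount of smoothness needed to apply the Itô--Peskir change-of-variable formula with local time on curves.

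First, I would apply that formula to the process $(v(t+s,X_{t+s}))_{s\in[0,1-t-\delta]}$ for a small $\delta>0$. The smooth-fit condition is already encoded in $v\in C^1$, so the local-time term along $\partial\cC$ vanishes, yielding
\begin{align*}
v(t+s,X_{t+s})=v(t,x)+\int_0^s(\partial_t+\cL)v(t+u,X_{t+u})\,\ud u+M_s,
\end{align*}
where $M$ is a local martingale. On $\cC$ the integrand is zero by \eqref{freeb1}, while on $\mathrm{int}(\cD)$, where $v(t,x)=\e^x$, a direct computation gives $(\partial_t+\cL)\e^x=\e^x(1/2-x/(1-t))$. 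Using the exponential bound \eqref{eq:ui} and boundedness of $b$ (Proposition \ref{prop:bfinite}) to localise and upgrade $M$ to a true martingale, taking expectations and letting $\delta\to 0$, the pinning $X_1=0$ together with $v(1,0)=1$ and dominated convergence yield \eqref{ValueFunctionRepr}. Setting $x=b(t)$ and using $v(t,b(t))=\e^{b(t)}$ produces \eqref{IntegralEq}. The side conditions $b(1)=0$ and $b(t)\ge (1-t)/2$ are already known, the former from \eqref{Boundary} together with \eqref{eq:discont1}--\eqref{eq:discont2}, the latter from $\cQ\subseteq\cC$ in \eqref{eq:Q}.

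For uniqueness, I would adapt the four-step scheme of \cite[Sec.~25]{peskir2006optimal}. Let $c:[0,1]\to\R_+$ be any continuous solution of \eqref{IntegralEq} satisfying $c(1)=0$ and $c(t)\ge(1-t)/2$, and define $v^c(t,x)$ by the right-hand side of \eqref{ValueFunctionRepr} with $b$ replaced by $c$. The steps would be: (i) show by a direct martingale/Itô argument applied to $\e^{X_{t+s}}$ between the boundary $c$ and the terminal time that $v^c(t,x)=\e^x$ whenever $x\ge c(t)$; (ii) let $\sigma_c$ be the first entry of $X$ into $\{x\ge c(t)\}$ and deduce from (i) and a second application of the change-of-variable formula that $v^c(t,x)=\E_{t,x}[\e^{X_{t+\sigma_c}}]\le v(t,x)$; (iii) assume $b(t_0)>c(t_0)$ for some $t_0$ and derive a contradiction by running $X$ from a point in $(c(t_0),b(t_0))$ and exploiting strict positivity of the integrand on $\{x>(1-t)/2\}$; (iv) reverse the roles of $b$ and $c$ to conclude $c\equiv b$.

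The main technical obstacle is the terminal singularity at $t=1$: the factor $1/(1-t-u)$ blows up precisely as the Brownian bridge pins at zero, so the justification of the limiting procedures (dominated convergence, true-martingale property of $M$, exchange of limit and expectation in the uniqueness proof) requires careful control via \eqref{eq:ui} and the boundedness of $b$. A secondary subtlety is that the change-of-variable formula needs local boundedness of $\partial_{xx}v$ near $\partial\cC$, which is exactly what Corollary \ref{cor:C2} provides. Once these two ingredients are in place, both the representation formula and the uniqueness argument follow the well-established template.
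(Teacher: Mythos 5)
Your proposal is correct and would work, but it reaches \eqref{ValueFunctionRepr} via a genuinely different technical route than the paper. The paper does not invoke the It\^o--Peskir change-of-variable formula with local time on curves. Instead, it constructs a mollifying sequence $(v_n)_{n\ge 0}\subseteq C^\infty([0,1)\times\R)$ that converges (with its first derivatives) uniformly on compacts to $v$ and whose second spatial derivatives converge pointwise off $\partial\cC$; it then applies the ordinary It\^o formula to $v_n$ on a localized time interval, passes $n\to\infty$ by dominated convergence, and finally removes the spatial localization $\tau_m$ and the temporal cutoff $\varepsilon$ by monotone and dominated convergence. Your route exploits exactly the same regularity inputs (Theorem \ref{thm:C1} plus Corollary \ref{cor:C2}) but packages them as hypotheses for Peskir's formula, with the $C^1$ smooth fit killing the local-time term. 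The advantage of your approach is that it is shorter and more conceptual once one accepts the change-of-variable formula; the advantage of the paper's mollification argument is that it is self-contained, relying only on the classical It\^o formula and standard convergence theorems, and it avoids having to check the precise hypotheses (one-sided $C^{1,2}$ regularity up to the closure of each region, boundary of bounded variation) of the more specialized formula. Your treatment of the terminal boundary value $v(1-,X_{1-})=1$, of the side conditions $b(1)=0$ and $b(t)\ge(1-t)/2$, and the four-step uniqueness argument all match the paper's strategy, which likewise only sketches the uniqueness proof with reference to \cite{peskir2005american} and \cite{peskir2006optimal}.
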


\begin{proof}
Thanks to Theorem \ref{thm:C1} and Corollary \ref{cor:C2}, we can find a mollifying sequence $(v_n)_{n\geq 0}\subseteq C^\infty([0,1)\times\R)$ for $v$ such that (see Section 7.2 in \cite{gilbarg2015elliptic})
	\begin{equation}\label{RegularFirstDerivative}
	(v_n,\partial_x v_n, \partial_t v_n)\to(v,\partial_x v, \partial_t v)
	\end{equation}
	as $n\to\infty$, uniformly on compact sets, and
	\begin{equation}\label{RegularSecondDerivative}
	\lim_{n\to\infty} \partial_{xx}v_n(t,x)=\partial_{xx}v(t,x), \quad \text{for all $(t,x)\notin\partial\cC$.}
	\end{equation}

We let $(K_m)_{m\geq 0}$ be a sequence of compact sets increasing to $[0,1-\varepsilon]\times\R$ and for $t<1$ we define
	$$\tau_m:=\inf\{s\geq 0: \: (t+s,X^{t,x}_{t+s})\notin K_m\}\wedge(1-t-\varepsilon).$$ 
By an application of It\^o's formula to $v_n$ and noticing that $\P(X^{t,x}_{t+s}=b(t+s))=0$ for $s\in[0,1-t)$, we obtain
	\begin{align}
	v_n(t,x) &=\E_{t,x}\bigg[v_n(t+\tau_m,X_{t+\tau_m})\\
	& \hspace{3.5em}-\int_0^{\tau_m}\Big(\partial_t v_n(t+s,X_{t+s})+ \cL v_n(t+s,X_{t+s})\Big)\ind_{\{X_{t+s}\neq b(t+s)\}}\mathrm{d} s\bigg]. \nonumber
	\end{align}
	Now, since $(t+s,X_{t+s})_{s\le\tau_m}$ lives in a compact, letting $n\to \infty$ and applying dominated convergence theorem, by \eqref{RegularFirstDerivative} and \eqref{RegularSecondDerivative} we obtain
	\begin{align*}
	v(t,x) &= \E_{t,x}\left[ \vphantom{\frac 1 2} v(t+\tau_m,X_{t+\tau_m})\right. \\
	&\hspace{3.5em} -\left.\int_0^{\tau_m}\Big(\partial_t v(t+s,X_{t+s})+ \cL v(t+s,X_{t+s})\Big)\ind_{\{X_{t+s}\neq b(t+s)\}}\mathrm{d} s \right] \\
	&=\E_{t,x}\left[v(t+\tau_m,X_{t+\tau_m})+ \int_0^{\tau_m}\e^{X_{t+s}}\Big(\frac{X_{t+s}}{1-t-s}-\frac{1}{2} \Big)\ind_{\{X_{t+s}> b(t+s)\}}\mathrm{d} s \right],
	\end{align*}
	where in the second equality we have used \eqref{freeb1} and the fact that $v(t,x)=\e^x$ in $\cD$.
	
Notice that $\tau_m\to 1-t-\varepsilon$ as $m\to \infty$ and the integrand on the right-hand side of the above expression is non-negative. Recalling \eqref{eq:vb} and letting $m\to\infty$, we can apply dominated convergence theorem and monotone convergence theorem (for the integral term) in order to obtain
\[
v(t,x)=\E_{t,x}\left[v(1-\varepsilon,X_{1-\varepsilon})+ \int_0^{1-t-\varepsilon}\e^{X_{t+s}}\Big(\frac{X_{t+s}}{1-t-s}-\frac{1}{2} \Big)\ind_{\{X_{t+s}> b(t+s)\}}\mathrm{d} s \right].
\]
By the same arguments, letting $\varepsilon\to 0$ we obtain \eqref{ValueFunctionRepr}, i.e.
	\begin{align*}
	v(t,x) &= \E_{t,x}\left[v(1-,X_{1-})+ \int_0^{1-t}\e^{X_{t+s}}\Big(\frac{X_{t+s}}{1-t-s}-\frac{1}{2} \Big)\ind_{\{X_{t+s}> b(t+s)\}}\mathrm{d} s \right]\\
	&=1+\E_{t,x}\left[\int_0^{1-t}\e^{X_{t+s}}\Big(\frac{X_{t+s}}{1-t-s}-\frac{1}{2} \Big)\ind_{\{X_{t+s}> b(t+s)\}}\mathrm{d} s \right],
	\end{align*}
where in the second line we have used that, for $t_n<1$,
\[
1 \leq \liminf_{(t_n,x_n)\to(1,0)}v(t_n,x_n)\leq\limsup_{(t_n,x_n)\to(1,0)}v(t_n,x_n)\leq\limsup_{(t_n,x_n)\to(1,0)}\e^{|x_n|}\E[\e^{|W_{\tau_n^*}|}]=1,
\]
which follows from problem formulation \eqref{MarkovianPb1} with $\tau_n^*:=\tau^*_{t_n,x_n}$.
	
Now the integral equation \eqref{IntegralEq} is obtained by setting $(t,x)=(t,b(t))$ in \eqref{ValueFunctionRepr}. Uniqueness of the solution to such equation follows a standard proof in four steps that was originally developed in \cite{peskir2005american}. The same proof has since been repeated in numerous examples, some of which are available in \cite{peskir2006optimal}. Therefore, here we only give a brief sketch of the key arguments of the proof.

Suppose there exists another continuous function $c:[0,1]\to\R_+$ with $c(1)=0$ and such that for all $t\in[0,1]$ it holds $c(t)\geq (1-t)/2$ and
\begin{align}\label{eq:int2}
\e^{c(t)} = 1+\E_{t,c(t)}\left[\int_0^{1-t}\e^{X_{t+s}}\Big(\frac{X_{t+s}}{1-t-s}-\frac 1 2 \Big)\ind_{\{X_{t+s}>c(t+s)\}}\mathrm{d} s\right].
\end{align}
Then, define the function
\[
v^c(t,x):=1+\E_{t,x}\left[\int_0^{1-t}\e^{X_{t+s}}\Big(\frac{X_{t+s}}{1-t-s}-\frac 1 2 \Big)\ind_{\{X_{t+s}>c(t+s)\}}\mathrm{d} s\right],
\]
i.e., the analogue of \eqref{ValueFunctionRepr} but replacing $b(\cdot)$ therein with $c(\cdot)$. Since $c(\cdot)$ is assumed continuous and the Brownian bridge admits a continuous transition density, it is not hard to show that $v^c$ is continuous on $[0,1)\times \R$. Moreover, it is clear that $v^c(1,x)=1$ for $x\in\R$ and, by \eqref{eq:int2}, $v^c(t,c(t))=\e^{c(t)}$ for $t\in[0,1]$. 

The main observation in the proof is that the process
\begin{align}\label{eq:martvc}
s\mapsto v^c(t+s,X_{t+s})+\int_0^{s}\e^{X_{t+u}}\Big(\frac{X_{t+u}}{1-t-u}-\frac 1 2 \Big)\ind_{\{X_{t+u}>c(t+u)\}}\mathrm{d} u
\end{align}
is a $\P_{t,x}$-martingale for any $(t,x)\in[0,1)\times \R$ and, moreover, it is a continuous martingale for $s\in[0,1-t)$. Using such martingale property and following \cite{peskir2005american} one obtains in order: (i) $v^c(t,x)=\e^x$ for all $x\geq c(t)$ with $t\in[0,1]$ and (ii) $v(t,x)\geq v^c(t,x)$ for all $(t,x)\in[0,1]\times\R$. Using (i) and (ii), continuity\footnote{It is shown in \cite{de2017integral} that continuity of the boundaries can be relaxed to right/left-continuity.} of $b(\cdot)$ and $c(\cdot)$ and, again, the martingale property of \eqref{eq:martvc} one also obtains: (iii) $c(t)\leq b(t)$ for all $t\in[0,1]$ and (iv) $c(t)\geq b(t)$ for all $t\in[0,1]$. Hence, $c(t)=b(t)$ for all $t\in[0,1]$.
\end{proof}

\section{Numerical results}\label{sec:numerics}

In order to numerically solve the nonlinear Volterra integral equation \eqref{IntegralEq}, we apply a Picard scheme that we learned from \cite{yerkin2018picard}.

First, notice that equation \eqref{IntegralEq} can be rewritten as
\[
\e^{b(t)}=1+\int_0^{1-t}\bigg(\int_{b(t+s)}^\infty \e^y\Big(\frac{y}{1-t-s}-\frac 1 2 \Big)p(t,b(t),t+s,y)\ud y \bigg)\mathrm{d} s,
\]
where $p(t,x,t+s,y):=\partial_y\P(X^{t,x}_{t+s}\le y)$ is the transition density of the Brownian bridge.

Let $\Pi:=\{0:=t_0<t_1<\ldots<t_n:=1\}$ be an equispaced partition of $[0,1]$ with mesh $h=1/n$. The algorithm is initialised by setting $b^{(0)}(t_j):=0$ for all $j=0,1,\ldots, n$. Now, let $b^{(k)}(t_j)$ denote, for $j=0,1,\ldots, n$, the values of the boundary obtained after the $k$-th iteration. Then, the values for the $(k+1)$-th iteration are computed, for all $j=0,\ldots, n$, as
\begin{align}\label{eq:num1}
\e^{b^{(k+1)}(t_j)}\!=\!1\!+\!\int_0^{1-t_j}\!\!\bigg(\!\int_{b^{(k)}(t_j+s)}^\infty \!\e^y\Big(\frac{y}{1\!-\!t_j\!-\!s}\!-\!\frac 1 2 \Big)p(t_j,b^{(k)}(t_j),t_j\!+\!s,y)\ud y \!\bigg)\mathrm{d} s.
\end{align}

In particular, the inner integral with respect to $\ud y$ can be computed explicitly. Indeed, noticing that 
\[
p(t,x,t\!+\!s,y)=\frac{1}{\sqrt{2\pi \alpha(t,s)}}\exp\left(-\frac{(y-\beta(x,t,s))^2}{2\alpha(t,s)}\right),
\]
with $\beta(x,t,s):=x(1-t-s)/(1-t)$ and $\alpha(t,s):=s(1-t-s)/(1-t)$, we can now substitute this expression inside the integral. Then, tedious but straightforward algebra allows to reduce the exponent of $\e^yp(t_j,b^{(k)}(t_j),t_j\!+\!s,y)$ to an exact square plus a term independent of $y$. Thus, properties of the Gaussian distribution give
\begin{align*}
&I(t_j,b^{(k)}(t_j),t_j\!+\!s,b^{(k)}(t_j+s))\\
&:=\int_{b^{(k)}(t_j+s)}^\infty \e^y\Big(\frac{y}{1-t_j-s}-\frac 1 2 \Big)p(t_j,b^{(k)}(t_j),t_j\!+\!s,y)\ud y\\
&=\e^{\gamma^{(k)}(t_j,s)}\left[\frac{\zeta(t_j,s)}{\sqrt{2\pi}}\e^{-\frac{\xi^{(k)}(t_j,s)^2}{2}}+\left(\eta^{(k)}(t_j,s)-\frac 1 2\right)\left(1-\Phi\big(\xi^{(k)}(t_j,s)\big)\right)\right],
\end{align*} 
where $\Phi$ is the cumulative density function of a standard normal distribution, and
\begin{align*}
&\gamma^{(k)}(t_j,s) \!:=\!\frac{(2b^{(k)}(t_j)\!+\!s)(1\!-\!s\!-\!t_j)}{2(1\!-\!t_j)}, \qquad \eta^{(k)}(t_j,s)\!:=\!\frac{b^{(k)}(t_j)\!+\!s}{1\!-\!t_j}, \\
&\zeta(t_j,s) \!:=\!\frac{s}{(1\!-\!s\!-\!t_j)(1\!-\!t_j)}, \qquad  \xi^{(k)}(t_j,s)\!:=\!\left(\frac{b^{(k)}(t_j\!+\!s)}{1\!-\!s\!-\!t_j}\!-\!\eta^{(k)}(t_j,s)\right)\frac{1}{\zeta(t_j,s)}.
\end{align*}

The integral with respect to the time variable (i.e., the one in $\ud s$) is computed by a standard quadrature method.  Hence, \eqref{eq:num1} reduces to
\begin{align*}
e^{b^{(k+1)}(t_j)}=1+h\sum_{m=0}^{n-1-j}I\big(t_j,b^{(k)}(t_j),t_j+mh+\tfrac{h}{2},b^{(k)}(t_j+mh+\tfrac{h}{2})\big),
\end{align*}
where each $b^{(k)}(t_j+mh+\tfrac{h}{2})$ is computed by interpolation and we use the convention $\sum_{m=0}^{-1}=0$ for $j=n$. Finally,
\begin{equation}\label{PicardSolution}
	b^{(k+1)}(t_j)=\log\left(1+h\sum_{m=0}^{n-1-j}I\big(t_j,b^{(k)}(t_j),t_j+mh+\tfrac{h}{2},b^{(k)}(t_j+mh+\tfrac{h}{2})\big)\right).
\end{equation}

The algorithm stops when the numerical error $e_k:=\max_{j=0,\ldots,n} |b^{(k)}(t_j)-b^{(k-1)}(t_j)|$ fulfills the tolerance condition $e_k<\eps$, for some $\eps>0$. A numerical approximation of the optimal boundary is presented in Figure \ref{fig-Optimal-boundary}.

While a rigorous proof of the convergence of the scheme seems difficult and falls outside the scope of this work, in Figure \ref{fig-Error} we show that the numerical error $e_k$ converges to zero as the number of iterations increases. Moreover, the convergence is monotone, which results in a good stability of the scheme.

Finally, in Figure \ref{fig-Value-function} we plot the value function as a surface in the $(t,x)$-plane using \eqref{ValueFunctionRepr}. It is interesting to observe that, as predicted by the theory, the value function exhibits a jump at $\{1\}\times(-\infty,0)$.

\begin{remark}\label{re:numT}
As noted in Remark \ref{rem:T}, we could consider a Brownian bridge with a generic pinning time $T>t$ and nothing would change in our analysis. However, it may be interesting to observe that as $T\to\infty$ the Brownian bridge converges (in law) to a Brownian motion $W$. Thus, we also expect that the stopping problem \eqref{MarkovianPb} converges to the problem of stopping the exponential of a Brownian motion over an infinite time horizon. Since $t\mapsto \exp(x+W_t)$ is a sub-martingale, the optimal stopping rule is to never stop. This heuristics is confirmed by Figure \ref{fig-different-T} where we observe numerically that the continuation set expands as $T$ increases and, in the limit as $T\to +\infty$, the stopping set disappears.
\end{remark}

\begin{figure}[t]
	\centering
	\includegraphics[scale=0.5]{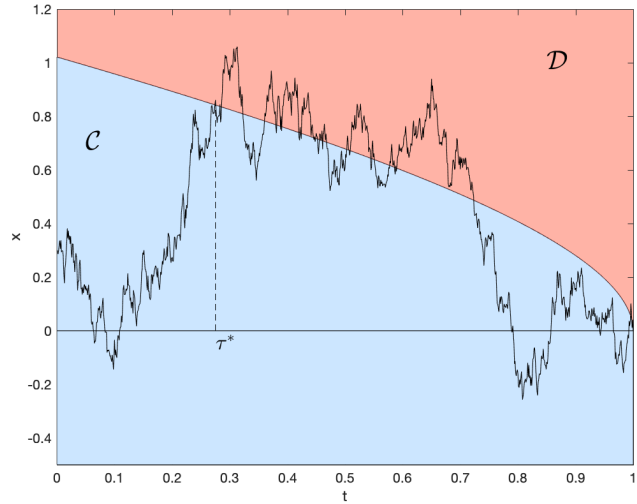}
	\caption{A sample path of a Brownian bridge $X$ starting at $X_0=0.3$ and pinned at $X_1=0$. The Brownian bridge hits the optimal boundary at $\tau^*\approx 0.3$. The boundary divides the state space into the continuation region $\cC$ (in blue) and the stopping region $\cD$ (in red). The tolerance of the algorithm is set to $\eps=10^{-6}$ and the equispaced time step is $h=10^{-3}$.}
	\label{fig-Optimal-boundary}
\end{figure}

\begin{figure}[t]
	\centering
	\includegraphics[scale=0.5]{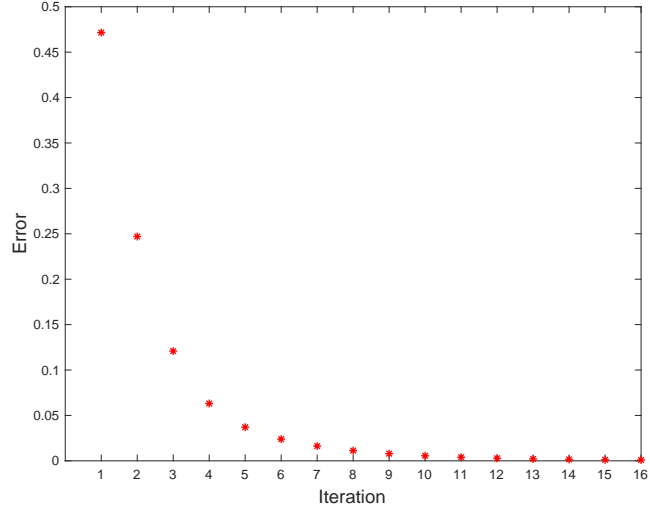}
	\caption{The trajectory of the error $e_k$ for the 16 iterations of the algorithm when $\eps=10^{-3}$.}
	\label{fig-Error}
\end{figure}

\begin{figure}[t]
	\centering
	\includegraphics[scale=0.5]{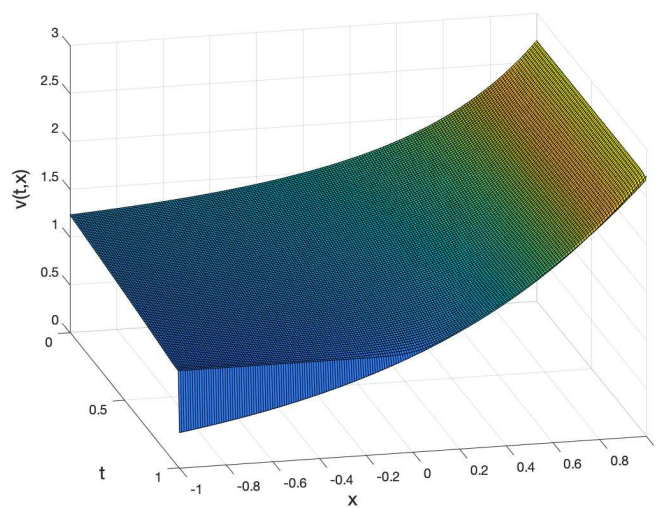}
	\caption{The value function surface $v(t,x)$ plotted on a grid of points $(t,x)\in[0,1]\times[-1,1]$ with discretization step $h=10^{-2}$.}
	\label{fig-Value-function}
\end{figure} 

\begin{figure}[t]
	\centering
	\includegraphics[scale=0.5]{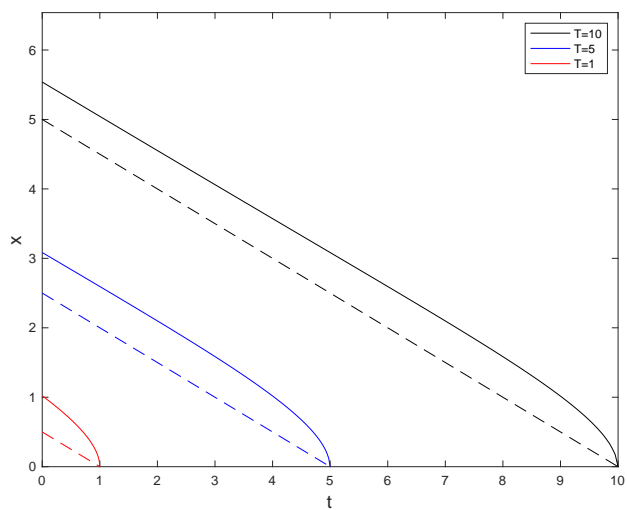}
	\caption{Boundary functions with starting point $t=0$ and increasing pinning times $T=1$ (continuous red line), $T=5$ (continuous blue line) and $T=10$ (continuous black line). Notice that every boundary lies above the corresponding line $x=\tfrac{1}{2}(T-t)$ (represented by the dashed lines in the same colours), which generalises the set $\cQ$ from \eqref{eq:Q}.}
	\label{fig-different-T}
\end{figure} 

\begin{remark}\label{rem:methods}
Traditionally, integral equations as in \eqref{IntegralEq} are solved by discretisation of the integral with respect to time and a backward procedure, starting from the terminal time (see, e.g., \cite[Ch.~VII, Sec.~27, pp.~432-433]{peskir2006optimal} for details in the case of the Asian option or \cite[Ch.~VIII, Sec.~30, p.~475]{peskir2006optimal} for another example; this method has been developed in the seminal paper \cite{kim1990analytic} and later extended). To the best of our knowledge, a rigorous proof of convergence for this `traditional' numerical scheme is not available. 

At each time step, the scheme must find the root of a highly non-linear algebraic equation, making the procedure slower than the Picard scheme that we implement, which requires no root finding (see Figure \ref{fig-Picard-Traditional} for a comparison).

Another possibility is to use finite-difference methods to solve directly the free boundary problem in \eqref{freeb1}--\eqref{freeb2}. The finite difference method, however, requires discretisation of both time and space (whereas we only discretise time) which leads to discretisation errors both in time and space and generally to a slower convergence. Moreover, in our case the coefficient associated to the first order partial derivative $\partial_x v$ is discontinuous at $t=1$, which causes additional difficulties.
\end{remark}

\begin{figure}[t]
	\centering
	\includegraphics[scale=0.5]{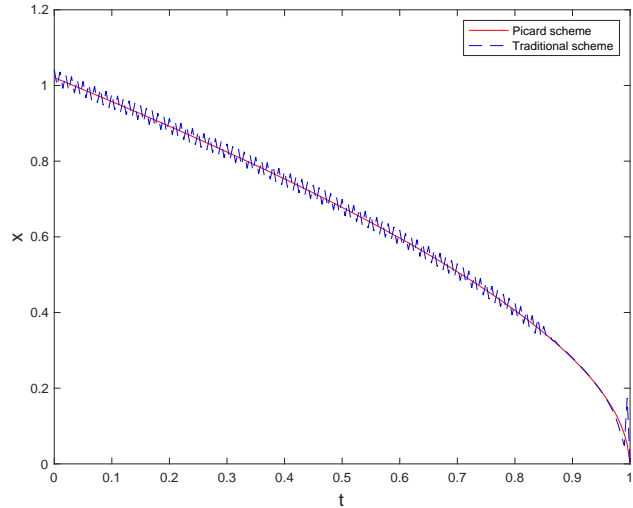}
	\caption{The solution of the optimal boundary found via Picard scheme (continuous line) and via traditional method (dashed line). The time step is $h=5\cdot 10^{-3}$ and the tolerance is $\varepsilon=10^{-5}$. The Picard scheme stops after 0.1 seconds and 36 iterations, the traditional method stops after 2.9 seconds.}
	\label{fig-Picard-Traditional}
\end{figure} 

\begin{figure}[t]
	\centering
	\includegraphics[scale=0.7]{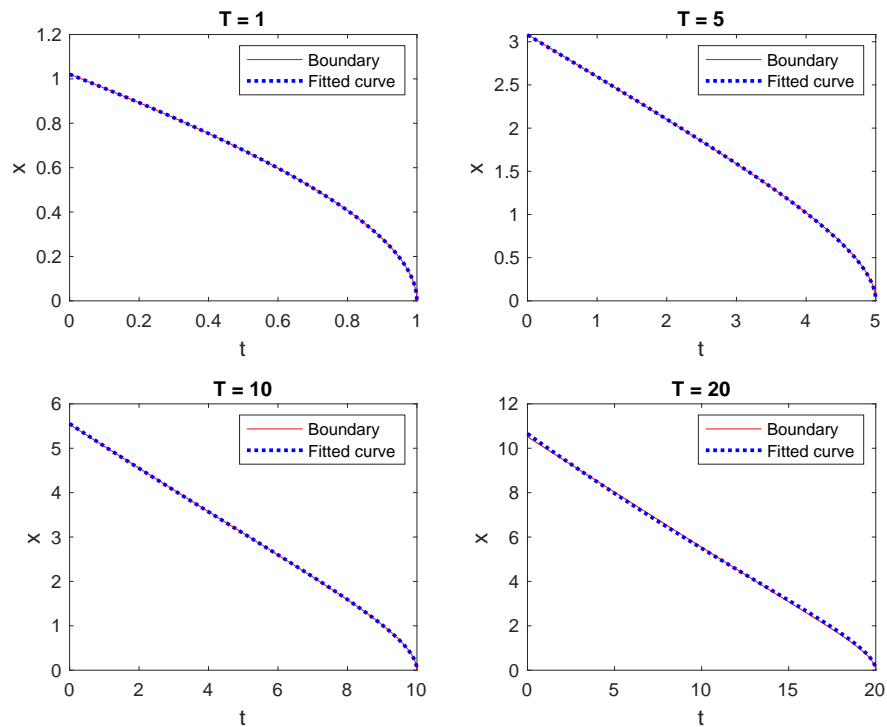}
	\caption{Boundary functions (in red) and the corresponding fitted curves (in blue) of the form $b_T(t)=A_T(1-\exp(B_T\sqrt{T-t}))$ for the pinning times $T=1,5,10,20$. The values of the parameters are, respectively, $A_1=-2.09$, $B_1=0.4$; $A_5=-1.85$, $B_5=0.43$; $A_{10}=-1.86$, $B_{10}=0.44$ and $A_{20}=-2.27$, $B_{20}=0.39$.}
	\label{fig-boundary-fits}
\end{figure} 

\begin{remark}
After we solved numerically equation \eqref{IntegralEq} and we were able to plot the boundary function as in Figure \ref{fig-Optimal-boundary}, we began to investigate whether a suitable fit for the boundary could be found. It is known that in the stopping problem with linear payoff $\E[X_\tau]$ and pinning time $T=1$, the optimal boundary can be found explicitly and it takes the form $b(t)=B\sqrt{1-t}$ (see \cite{shepp1969}). Motivated by this result we consider candidate boundaries of the form $b_T(t)=A_T(1-\exp(B_T\sqrt{T-t}))$, where $T$ is the pinning time of the Brownian bridge, and $A_T$ and $B_T$ are parameters to be determined.

Using the `curve fitting toolbox' in Matlab to fit our ansatz to the boundaries obtained from the integral equations, we obtain an excellent fit for $T=1,5,10$. The quality of the fit slightly deteriorates for larger $T$'s (e.g., for $T=20$). Results are illustrated in Figure \ref{fig-boundary-fits}. While these tests suggest that our problem might be amenable to an explicit solution, the question is more complex than in the linear payoff case (and its extensions in \cite{ekstrom2009optimal}) and remains open. The key difficulties are that (i) we must determine two parameters rather than one and (ii) we do not have a good guess for the value function that would allow us to transform the free-boundary problem \eqref{freeb1}-\eqref{freeb2} into a solvable ordinary differential equation. Indeed, in the linear case one uses the identity in law
\[
\mathsf{Law}\big(X^{t,x}_s\,,\, s\in[t,1]\big)=\mathsf{Law}\big(\widetilde{Z}^x_s\,,\,s\in[0,\infty)\big),
\]
with $\widetilde{Z}^{x}_s:=(x+\sqrt{1-t}W_{s})/(1+s)$, and obtains
\[
U(t,x):=\sup_{0\le \tau\le 1-t}\E[X^{t,x}_{t+\tau}]=\sup_{\tau\ge 0}\E\left[\frac{x+\sqrt{1\!-\!t}\,W_{\tau}}{1+\tau}\right]=\sqrt{1\!-\!t}\,U\left(0,\tfrac{x}{\sqrt{1-t}}\right).
\]
With the exponential payoff, the same identity does not provide any useful insight.

A different approach based on finding parameters for which the guessed boundary $b_T(t)$ solves the integral equation \eqref{IntegralEq} seems even harder.
\end{remark}

\vspace{+5pt}

\noindent{\bf Acknowledgment}: T.~De Angelis gratefully acknowledges support via EPSRC grant EP/R021201/1, ``\emph{A probabilistic toolkit to study regularity of free boundaries in stochastic optimal control}''. A.~Milazzo gratefully acknowledges support from Imperial College's Doctoral Training Centre in Stochastic Analysis and Mathematical Finance. Parts of this work were carried out while A.~Milazzo was visiting the School of Mathematics at the University of Leeds. Both authors thank the hospitality from the University of Leeds. Finally, we thank an anonymous referee whose useful suggestions improved the quality of the paper and in particular Section \ref{sec:numerics}.

\bibliography{bibfile}{}
\bibliographystyle{abbrv} 

\end{document}